\def\op#1{{\hbox{#1}}} 
\def\tikzfig#1#2#3{%
\begin{figure}[htb]%
  \centering
\begin{tikzpicture}#3
\end{tikzpicture}
  \caption{#2}
  \label{fig:#1}%
\end{figure}%
}
\par\phantom{!}\endgroup\bigskip}
\def\wasitemize{\relax}
\def\case#1{{\sc (#1)}}
\def\firstcase#1{{\indy{Index}{named property!#1}}\relax{\sc (#1)}}
\def\claim#1{{\it  #1}}
\def\indy#1#2{\index{index/#1}{#2}\relax}
\def\newterm#1{\indy{Index}{#1}{\it #1}\relax}
\def\fullterm#1#2{\indy{Index}{#2}{\it #1}\relax}
\def\cc#1#2{%
  \indy{Index}{computer calculation!{#1}}{\it computer calculation}
  {\footnote{\guidfoot{#1}  #2}}~\cite{website:FlyspeckProject}} 
\def\hide#1{}
\def\swallowed{\relax}
\def\swallow#1\swallowed{}
\newenvironment{iproved}{}{}
\def\hideproof{\renewenvironment{iproved}{%
   \centerline{\it -- Proof Proofed --}
  
  \renewenvironment{enumerate}{}{}
  \def\item{\relax}
  \catcode13=12
  \swallow
}{}}
\def\showproof{\renewenvironment{iproved}{\begin{proof}}{\end{proof}}}
\def\resetproved{\if\displayallproof t\showproof\else\hideproof\fi}
\def\rating#1{\if\displayrating t%
  {{\textsc {[rating={\ensuremath {#1}}].\ }}}\else{}\fi}
\def\cutrate{}
\def\oldrating#1{\if\displayrating t%
  {{\textsc {[former rating={\ensuremath {#1}}].\ }}}\else{}\fi}
\def\formalauthor#1{\if\verbose t{{\tt [formal proof by #1].\ }}\else{}\fi}
\def\dcg#1#2{{\if\verbose t%
  {{\tt{[DCG-#1]}}\indy{References}{ZC{#2 #1}@{DCG-#1}|page{#2}}}\else{}\fi}}
\def\tlabel#1{\label{#1}\if\verbose t{{\tt [#1].\ }%
   \indy{References}{#1|itt}}\else{}\fi}
\def\ifcverbose#1#2{\if\verbose t{{#1}}\else{#2}\fi}
\def\ifverbose#1{\ifcverbose{#1}{}}  
\def\formal#1{\relax }
\def\guid#1{\ifverbose{{\tt [#1]}}}
\def\guidfoot#1{{{\tt [#1]}}}
\def\mar#1{}
\def\textand{\text{ \ and \ }}  
\def\emptyset{\varnothing}
\def\|{\hbox{\ensuremath{\hspace{0.1em}|\hspace{-0.1em}|\hspace{0.1em}}}}
\def\mid{\ :\ }
\def\norm#1#2{\hbox{\ensuremath{\|#1\unskip-\unskip{#2}\|}}}
\def\normo#1{{\|#1\|}}
\def\leftclosed{[}
\def\rightopen{)}
\def\=#1{\accent"16 #1}
\def\CalV{{\mathcal V}}
\def\CalL{{\mathcal L}}
\newcommand{\ring}[1]{\mathbb{#1}}
\def\v{{\mathbf v}}
\def\u{{\mathbf u}}
\def\w{{\mathbf w}}
\def\p{{\mathbf p}}
\def\op#1{{\operatorname{#1}}}
\def\azim{\op{azim}}
\def\sol{\operatorname{sol}}
\def\orz{{\mathbf 0}} 
\def\Wdarto{W^0_{\text{dart}}}
\def\card{\op{card}}
\def\pqr#1{#1} 
\def\ee{\varepsilonup}
\def\ocirc{}
\def\verbose{f}
\begin{document}

\title{A Proof of Fejes T\'oth's  Conjecture on Sphere Packings with
Kissing Number Twelve}
\author{Thomas C. Hales\thanks{{Research supported in part by 
NSF grant 0804189 and the Benter Foundation.}}}
\institute{University of Pittsburgh\\
\email{hales@pitt.edu}}
\maketitle

\section{Fejes T\'oth's Conjecture}

On December 26, 1994, L. Fejes T\'oth wrote to me, ``I suppose that
you will be interested in the following conjecture: In $3$-space any
packing of equal balls such that each ball is touched by twelve others
consists of hexagonal layers.  In the enclosed papers a strategy is
described to prove this conjecture''~\cite{Fejes-Toth:89},
~\cite{Fejes-Toth:69}.  This article verifies Fejes-T\'oth's
conjecture.\footnote{The results of this article were presented at the
  Fields Institute, September 2011.}

A packing of balls in this article is identified with its set of
centers.  We adopt the convention that balls in a packing have unit
radius.  Formally, a \newterm{packing} is a set $V\subset\ring{R}^3$
for which $\norm{\v}{\w} < 2$ implies $\v=\w$,
for every $\v,\w\in V$.  It is known that the kissing number
in three dimensions is twelve~\cite{Leech:1956:MG}.  Call a nonempty
packing $V$ in $\ring{R}^3$ in which every $\v\in V$ has distance $2$
from twelve other $\w\in V$ a \newterm{packing with kissing number twelve
}.  

Two examples of packings consisting of hexagonal layers in which each
ball is touched by twelve others are the face-centered cubic packing
(FCC) and the hexagonal-close packing (HCP).  In the FCC packing, the
arrangement of twelve other balls around each ball is identical.  We
call this particular arrangement of twelve balls the FCC pattern.
Similarly, in the HCP packing, the arrangement of twelve around each
ball is identical, and we call this particular arrangement the HCP
pattern.


It is well known that if the arrangement around each ball is either
the FCC or HCP pattern, then the packing consists of hexagonal
layers~\cite[Sec.~1.3]{DSP}.  Hence the following theorem, which is
the main result of this article, is enough to guarantee that a packing
with kissing number twelve consists of hexagonal layers.

\indy{Notation}{V@$V$ (packing)}%

\begin{theorem}[Packings with kissing number twelve]\guid{BDEDUTL}\label{thm:fc} 
  Let $V$ be a packing with kissing number twelve.  Then for every point $\u\in
  V$, the set of twelve around that point is arranged in the pattern
  of the HCP or FCC packing.
\end{theorem}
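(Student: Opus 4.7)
Fix a ball $\u\in V$ and write $N(\u)=\{\v_1,\ldots,\v_{12}\}$ for its twelve kissers. After translating so that $\u=\orz$, each $\v_i$ lies on the sphere of radius~$2$, and the packing condition forces $\norm{\v_i}{\v_j}\ge 2$ for $i\ne j$. Radial projection onto $S^2$ produces twelve points with pairwise spherical distance at least~$\pi/3$, and the twelve disjoint spherical caps of angular radius $\pi/6$ around them fill most, but not all, of $S^2$. The strategy is to combine this local constraint at $\u$ with the fact that \emph{every} ball in $V$ has the same kissing-number-twelve property, in order to force the configuration to be metrically equal to the cuboctahedron (the FCC pattern) or the triangular orthobicupola (the HCP pattern).

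The first step is to introduce the \emph{contact graph} $\Gamma_\u$ on $N(\u)$, with an edge between $\v_i$ and $\v_j$ exactly when $\norm{\v_i}{\v_j}=2$. Each edge corresponds to an equilateral triangle $\u\v_i\v_j$ of side~$2$, so edges of $\Gamma_\u$ witness ``tight'' subconfigurations with no slack. Both target patterns realise $\Gamma_\u$ as a 4-regular planar graph on $12$ vertices with $24$ edges and face structure $8$ triangles plus $6$ quadrilaterals; the cuboctahedron and the triangular orthobicupola differ only in the rotational alignment of the two triangular caps. My goal is first to prove that $\Gamma_\u$ is combinatorially one of these, and then that the metric realisation is rigid.

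To pin down $\Gamma_\u$, I would invoke the kissing hypothesis at each $\v_i$: one kisser of $\v_i$ is $\u$, and the remaining eleven must fit around $\v_i$ with the same angular constraints. Several of them are forced to coincide with elements of $N(\u)$, because the spherical room at $\v_i$ is too tight otherwise; propagating these forced coincidences back produces extra edges of $\Gamma_\u$. Combined with the spherical angle constraint that every internal angle of a face of $\Gamma_\u$ at $\u$ is at least~$\pi/3$ (with equality on edges of tight triangles), and with Euler's formula, a case analysis on the possible face types of $\Gamma_\u$ should reduce to a short list. Each surviving type is either identified with FCC or HCP, or excluded by exhibiting a $\v_i$ whose eleven remaining kissers cannot be arranged consistently.

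The main obstacle, I expect, is the continuous moduli of non-standard local kissing configurations: one can rotate a cap of three neighbors around a square face and interpolate between FCC and HCP through arrangements in which almost no pair in $N(\u)$ is tight. Such intermediate configurations have a nearly empty $\Gamma_\u$, so the one-ring propagation from $\u$ to the $\v_i$ may not by itself rule them out. The hard core of the proof will probably be a quantitative rigidity statement — either a geometric inequality in the spirit of the proof of the Kepler conjecture that penalises angular slack around a single ball, or an iterated propagation to neighbors-of-neighbors showing that any slack at $\u$ eventually forces some ball several shells away to admit strictly fewer than twelve kissers, contradicting the hypothesis.
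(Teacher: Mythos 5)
Your outline captures the right surface features (contact graph, angular/Euler constraints, a combinatorial short list, a final rigidity step) and you correctly identify the central obstacle: a generic kissing configuration of twelve has almost no tight pairs, so the contact graph by itself carries no information and one-shell propagation is hopeless. But the proposal is missing the specific lemma that dissolves this obstacle, and it is the real content of the paper.

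That lemma is the separation (``gap'') property: if $V$ is a packing with kissing number twelve and $\u,\v\in V$, then either $\u=\v$, $\norm{\u}{\v}=2$, or $\norm{\u}{\v}\ge 2h_0$ with $h_0=1.26$. Its proof is short but leans on a nontrivial quantitative estimate from the Kepler-conjecture machinery (the inequality $\sum_\v L(\normo{\v}/2)\le 12$ for a packing of points in the shell $2\le\normo{\v}\le 2h_0$): if a thirteenth point were at distance in $(2,2h_0)$ from $\u$, the twelve kissers alone already saturate the bound. Applied with a kisser $\v_i$ in the role of the center, this forces every pair of kissers of $\u$ to be at distance exactly $2$ or at least $2h_0\approx 2.52$. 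This is exactly what kills your ``continuous moduli'' worry: the contact graph is no longer fragile under perturbation, because edges are separated by a definite gap from non-edges. Your second guess --- ``a geometric inequality in the spirit of the proof of the Kepler conjecture'' --- is pointed in the right direction, but the inequality needed is a packing estimate yielding a distance dichotomy, not a penalty for ``angular slack,'' and it is applied once at each vertex rather than iterated outward through shells; the paper never needs neighbors-of-neighbors.

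Two more points where the proposal underestimates the work. First, even after the gap lemma, getting from ``twelve points on $S^2(2)$ with pairwise distance $2$ or $\ge 2h_0$'' to a finite list of combinatorial types is not a quick Euler's-formula case analysis: the paper has to build the fan/hypermap formalism, prove a main area estimate $\tau(V,E,F)\ge\min(d_2(r,s),\op{tgt})$ face by face via a Delaunay triangulation and Lexell's theorem, define a class of ``tame contact'' hypermaps, and then run a computer enumeration that produces exactly eight candidates; five of those are then eliminated by linear programming and one by a perimeter argument. Second, your proposed mechanism for generating extra contacts --- ``several of the kissers of $\v_i$ are forced to coincide with elements of $N(\u)$'' --- is not used and does not obviously hold; the paper instead uses the gap lemma to constrain distances and then a global weight/area budget to constrain the combinatorics. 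The final rigidity step you describe is correct and is exactly the paper's last lemma: the eight equilateral contact triangles determine the FCC or HCP configuration up to congruence.
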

\indy{Index}{HCP}%
\indy{Index}{FCC}%
\indy{Index}{kissing configuration}%

The truncation parameter $h_0=1.26$ will be used throughout this article.
The following estimate is one of the main results of~\cite{DSP}.  Its proof
is omitted.

\begin{lemma}\label{lemma:dsp}
Let $V$ be a finite packing.  Assume that $2\le \normo{\v}\le 2h_0$ for
every $\v\in V$.  Let $L(h) = (h_0-h)/(h_0-1)$.  Then
\begin{equation}\label{eqn:L}
\sum_{\v\in V} L(\normo{\v}/2) \le 12.
\end{equation}
\end{lemma}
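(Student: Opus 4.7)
The plan is to view $\sum_{\v\in V} L(\normo{\v}/2)$ as a weighted analogue of the kissing count: when every $\v$ lies at distance exactly $2$ from the origin each summand equals $1$, and the inequality reduces to the classical bound of $12$ on the kissing number in $\ring{R}^3$. To prove the weighted version I would attach to each $\v\in V$ a region $R_\v\subset S^2$ on the unit sphere, show that the $\{R_\v\}$ are pairwise disjoint, and bound the area of each $R_\v$ from below by $\tfrac{\pi}{3}\,L(\normo{\v}/2)$. Since $\op{area}(S^2)=4\pi = 12\cdot \tfrac{\pi}{3}$, summing gives the result.

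Set $\hat\v=\v/\normo{\v}\in S^2$. The packing condition $\norm{\v}{\w}\ge 2$ translates, via the law of cosines, into
\[
  \cos\theta_{\v\w} \;\le\; \frac{\normo{\v}^2+\normo{\w}^2-4}{2\normo{\v}\normo{\w}},\qquad \theta_{\v\w}:=\arccos(\hat\v\cdot\hat\w),
\]
and on the square $[2,2h_0]^2$ the right-hand side is largest when $\normo{\v}=\normo{\w}=2h_0$, shrinking the minimum angular separation to roughly $46.8^\circ$. The most naive choice---$R_\v$ the largest spherical cap around $\hat\v$ that is disjoint from the other such caps---already fails at $\normo{\v}=2$, since the tangent cap of radius $30^\circ$ has area $2\pi(1-\sqrt3/2)\approx 0.84$, short of the required $\pi/3\approx 1.05$. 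So caps alone do not suffice, and I would take $R_\v$ to be the weighted Voronoi (Laguerre) cell of $\hat\v$ on $S^2$ with a weight determined by $\normo{\v}$, chosen so that the cells tile $S^2$ by construction and so that a cell centered at a near-tangent vector is pushed out into the slack left by its farther neighbors.

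The crux is then the pointwise area inequality $\op{area}(R_\v)\ge (\pi/3)\,L(\normo{\v}/2)$. Since the shape of $R_\v$ depends on the neighboring $\hat\w$, I expect a convexity and monotonicity argument to reduce the problem to the worst local configuration: every neighbor of $\hat\v$ sits at the minimum angular separation allowed by $\normo{\v}$ and $\normo{\w}$, and the number of such neighbors is maximized. This collapses the statement to a finite family of nonlinear inequalities in one or two variables on a compact domain.

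The main obstacle is the verification of this extremal family: the cap/cell area must be balanced against the linear function $L$ over the full range $[2,2h_0]$, and neither side is obviously concave or convex. I would discharge these inequalities by a rigorous computer calculation (interval arithmetic on explicit formulas for the vertices, edges and area of $R_\v$), exactly the style of estimate pervasive in the Dense Sphere Packings framework. The weight $L$ is chosen so that the bounds are simultaneously tight at the kissing configuration and amenable to such numerical verification, so that the delicate step is not the global combinatorics but the local geometric inequality for a single Voronoi cell.
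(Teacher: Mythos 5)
The paper does not actually prove this lemma: it is imported as ``one of the main results of~\cite{DSP}'' with the proof explicitly omitted, so the question is whether your sketch could stand in for that proof. It cannot, because your crux inequality $\op{area}(R_\v)\ge(\pi/3)\,L(\normo{\v}/2)$ is false already in the unweighted case where every point has norm $2$ (so $L=1$ and the full $\pi/3$ is required, leaving no room for any choice of Laguerre weights). Concretely, take $V=\{\p,\w_1,\dots,\w_5\}\subset S^2(2)$ with each $\w_i$ at angular distance exactly $60^\circ$ from $\p$ and the $\w_i$ equally spaced in azimuth; consecutive neighbors are then at angular distance $\arccos\bigl(\tfrac14+\tfrac34\cos 72^\circ\bigr)\approx 61.2^\circ\ge 60^\circ$, so this is a legitimate packing. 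The spherical Voronoi cell of $\p$ is contained in the regular spherical pentagon with inradius $30^\circ$, whose area is $10B-3\pi$ where $\cos B=\cos 30^\circ\sin 36^\circ$, i.e.\ about $0.94<\pi/3\approx 1.05$. The global inequality holds here ($6\le 12$), but the local one does not: area must be transferred between neighboring cells, and no cell-by-cell decomposition of the kind you propose can exist. Your reduction to ``every neighbor at minimal separation, number of neighbors maximized'' is exactly this configuration, and it violates rather than witnesses the bound; this is also why the thirteen-spheres problem does not follow from local cap or Voronoi-cell estimates and required Leech's global combinatorial argument.

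The actual content of the lemma is correspondingly deep: in \cite{DSP} it is established by the full Kepler-conjecture machinery (a cell/cluster decomposition with non-local transfer of weight, the classification of tame hypermaps, hundreds of interval-arithmetic inequalities, and linear programming), and the present article is structured precisely so that everything reduces to this one imported estimate (see how Lemma~\ref{lemma:gap} uses it). A short local Voronoi argument, even with computer verification of the single-cell inequalities, cannot replace it because the single-cell inequality it would need is not true.
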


\begin{lemma}\guid{LIHVTRE} \label{lemma:gap}
  Let $V$ be any packing with kissing number twelve and let $\u,\v\in V$.
  Then $\u=\v$, $\norm{\u}{\v}=2$, or $\norm{\u}{\v} \ge 2h_0$.
\end{lemma}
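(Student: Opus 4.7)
The plan is to argue by contradiction using the estimate of Lemma \ref{lemma:dsp} applied to the local neighborhood of $\u$. Since $V$ is a packing, the inequality $\norm{\u}{\v}\ge 2$ always holds, with equality only if the two centers touch. So the only case to rule out is $2<\norm{\u}{\v}<2h_0$; assume for contradiction that such $\v$ exists.

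Translate so that $\u$ sits at the origin and set
\[
W=\{\w-\u \mid \w\in V,\ \w\ne\u,\ \norm{\w}{\u}\le 2h_0\}.
\]
A standard volume/packing argument shows $W$ is finite, since its points are pairwise at distance $\ge 2$ and all lie in the ball of radius $2h_0$ about the origin. Moreover $W$ inherits the packing property from $V$ and satisfies $2\le\normo{\w'}\le 2h_0$ for every $\w'\in W$, so Lemma~\ref{lemma:dsp} gives
\[
\sum_{\w'\in W} L(\normo{\w'}/2)\;\le\;12.
\]

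Now I would evaluate the left-hand side term by term. Since $V$ has kissing number twelve, $\u$ has exactly twelve neighbors in $V$ at distance $2$; these produce twelve vectors in $W$ of norm $2$, each contributing $L(1)=(h_0-1)/(h_0-1)=1$ to the sum, for a total of $12$. The vector $\v-\u$ also lies in $W$ (by the assumption $\norm{\u}{\v}<2h_0$), and because $\norm{\u}{\v}>2$ it is distinct from all twelve kissing vectors. Its contribution is
\[
L(\norm{\u}{\v}/2)=\frac{h_0-\norm{\u}{\v}/2}{h_0-1}>0,
\]
so the sum strictly exceeds $12$, contradicting the displayed inequality.

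There is no real obstacle here beyond bookkeeping: the argument is simply the observation that the DSP estimate is already tight on the twelve unit-distance kissing neighbors, leaving no ``budget'' for additional centers strictly inside the truncation radius. The only care needed is in asserting finiteness of $W$ and in verifying that $\v-\u$ is not one of the twelve kissing vectors, both of which follow immediately from the packing hypothesis and the strict inequality $\norm{\u}{\v}>2$.
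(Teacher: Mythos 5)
Your proposal is correct and follows essentially the same route as the paper: translate $\u$ to the origin and apply Lemma~\ref{lemma:dsp} to the twelve kissing vectors together with $\v-\u$, observing that the twelve contributions of $L(1)=1$ already exhaust the bound of $12$, so any additional point strictly inside radius $2h_0$ yields a contradiction. The only cosmetic difference is that you phrase it as a contradiction on the set $W$ of all nearby points, while the paper applies the estimate directly to the thirteen relevant points and concludes $L(\normo{\v}/2)\le 0$.
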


\begin{proof} Let $ \u_1,\ldots, \u_{12}$ be the twelve kissing points
  around $\u$; that is, $\norm{\u_i}{\u}=2$.  
By translating $V$, we may assume without
loss of generality that $\u=\orz$.  Assume that $\v\ne \u_i,\orz$ is in $V$.   
By Lemma~\ref{lemma:dsp},
\[
   L(\normo{\v}/2)  + 12 
  =  L(\normo{\v}/2) + \sum_{i=1}^{12} L(\normo{\u_i}/2)  \le 12.
\]
This implies that $L(\normo{\v}/2)\le 0$, so $\normo{ \v}\ge 2h_0$.
\qed
\end{proof}

A packing $V$ may always be translated so that $\orz\in V$.  We study
the structure of a kissing configuration centered at the origin that
has the separation property of Lemma~\ref{lemma:gap}.

\begin{definition}[$S^2(2)$,~$\CalV$]
  Let $S^2(2)$ be the sphere of radius $2$, centered at the origin.  Let
  $\CalV$ be the set of packings $V\subset \ring{R}^3$ such that
\begin{enumerate}\wasitemize 
\item $\card(V) = 12$,
\item $V\subset S^2(2)$,
\item For all $\u,\v\in V$, we have $\u=\v$, $\norm{\u}{\v}=2$,
or $\norm{\u}{\v}\ge 2h_0$.
\end{enumerate}\wasitemize 
\indy{Notation}{V4@$\CalV$ (twelve sphere configurations)}%
\indy{Notation}{S@$S^2(r)$ (sphere of radius $r$)}%
\end{definition}

If $V\in \CalV$, set 
\[
E_2(V)=\{\{\v,\w\} \mid \norm{\v}{\w}=2\},
\]
the set of {\it contact} edges.

\section{Definitions and Review}

We follow the general approach to sphere packing problems described in
\cite{DSP}.  We use a number of definitions given there.  In
particular, we have the following.

\begin{definition}[affine]\guid{BYFLKYM}\label{def:aff} 
  If $S = \{\v_1,\v_2,\ldots,\v_k\}$ and $S'=\{\v_{k+1},\ldots,\v_n\}$
  are finite subsets of $\ring{R}^N$, then set
	\begin{align*}
\op{aff}_{\pm} (S,S') &= \{t_1 \v_1 +\cdots t_n \v_n \mid
	t_1 +\cdots+t_n = 1,~ \pm t_j \ge 0, \text{ for } j>k\},\\
\op{aff}^0_{\pm} (S,S') &= \{t_1 \v_1 +\cdots t_n \v_n \mid
	t_1 +\cdots+t_n = 1,~ \pm t_j > 0, \text{ for } j>k\}.
		\end{align*}
\indy{Notation}{aff2@$\op{aff}_{\pm}$, $\op{aff}^0_{\pm}$}%
\indy{Index}{affine}%
\indy{Notation}{S@$S\subset\ring{R}^n$}%
\end{definition}


This notation is general enough to describe rays, lines, open and closed intervals,
convex hulls, and affine hulls.
To lighten the notation for singleton sets, abbreviate
$\op{aff}_\pm(\{\v\},S')$ to $\op{aff}_\pm(\v,S')$.
If $S\subset\ring{R}^3$ is a finite set of points,
abbreviate
\begin{align*}
C_\pm(S) &= \op{aff}_\pm(\orz,S),\\
C^0_\pm(S) &= \op{aff}^0_\pm(\orz,S).\\
\end{align*}
When the subscript is absent, the subscript $+$ is implied: $C_+(S)
= C(S)$, and so forth.  The parentheses around the set are frequently
omitted without change in meaning:
\[ C^0\{\v,\w\}=C^0_+(\{\v,\w\}) =
\op{aff}^0_+(\{\orz\},\{\v,\w\}) = \op{aff}_+^0(\orz,\{\v,\w\}).\]

\begin{definition}[fan,~blade]\label{def:fan}
Let $(V,E)$ be a pair consisting of a set $V\subset \ring{R}^3$ and
a set $E$ of unordered pairs of distinct elements of $V$.  The pair
is said to be a \newterm{fan\/} if the following properties hold.
\begin{enumerate}
\item \firstcase{cardinality} $V$ is finite and  nonempty.
\item \firstcase{origin} $\orz\not\in V$.
\item \firstcase{nonparallel} If $\{\v,\w\} \in E$, then the set
$\{\orz,\v,\w\}$ is not collinear.
\item \firstcase{intersection}
For all $\ee ,\ee '\in E \cup \{\{\v\}\mid \v\in V\}$, 
\[ C(\ee )\cap C(\ee ') = C(\ee \cap \ee
').\] 
\end{enumerate}
When $\ee\in E$, call $C^0(\ee)$ or $C(\ee)$ a \newterm{blade}
of the fan.  
\end{definition}

Basic properties of fans are developed in \cite[Ch.~5]{DSP}.
Every fan is graph with vertex set $V$ and edge set $E$.
We use the terminology of graph theory to describe fans.  For example,
we say that $\v,\w$ are \newterm{adjacent} if $\{\v,\w\}\in E$.

\begin{definition}[hypermap]   A \newterm{hypermap} is a finite set $D$, together with three functions
  $e,n,f:D\to D$ that compose to the identity
  \[ 
e\ocirc n\ocirc f = I_D.
\]  The
elements of $D$ are called \fullterm{darts}{dart}.  The functions $e,n$ and
$f$ are called the \fullterm{edge map}{edge!map}, the \fullterm{node map}{node!map}, and
the \fullterm{face map}{face!map}, respectively. 
The orbits of the set of darts under the edge map, node map, and face map are called edges, faces, and nodes of the hypermap.
\end{definition}

\begin{example}[dihedral]\label{ex:D2k} 
There is a hypermap $\op{Dih}_{2k}$ with a dart set of cardinality $2k$.
The permutations $f,n,e$ have  orders $k$, $2$, and $2$ respectively, and 
$e n f = I$.
The set of darts is given by
\[ 
\{x, f x,f^2 x,\ldots,f^{k-1} x\}\cup \{n x, n f x, n f^2 x,\ldots, n f^{k-1} x\}
\] 
for any dart $x$.
If a hypermap is isomorphic to $\op{Dih}_{2k}$ for
some $k$, then it is \newterm{dihedral}.
(The three permutations generate the dihedral
group of order $2k$, acting on a set of $2k$ darts under the left action of the group upon itself.) 
\indy{Index}{hypermap!dihedral}%
\end{example}


Basic properties of hypermaps are developed in \cite[Ch.~4]{DSP}.  An
\newterm{isomorphism of hypermaps} is a bijection $\phi:D\mapsto D'$
such that $\phi(h x) = h'\phi x$, for all $x\in D$ and all structure
permutations $(h,h')=(e,e'),(f,f'),(n,n')$.

For each $\v\in V$ such that $\card(E(\v))>1$, we define a cyclic
permutation $\sigma(\v):E(\v)\to E(\v)$ as follows.  Choose
$\pi_\v:\ring{R}^3\to\ring{C}$, a surjective real-linear  map, with
$\v\in \op{ker}\,\pi_\v$, chosen so that
$(\pi^{-1}_\v(1),\pi^{-1}_\v(\sqrt{-1}),\v/\normo{\v})$ is a
positively-oriented, orthonormal basis of $\ring{R}^3$.  Define the
permutation $\sigma(\v)$ by pulling back the counterclockwise cyclic
permutation of the points on the unit circle:
\[
\{\pi_\v(\w)/\normo{\pi_\v(\w)} \mid \w\in E(\v)\} \subset \ring{C}^\times.
\]
Write $\sigma(\v,\w)$ for $\sigma(\v)(\w)\in E(\v)$.  
Let $\op{arg}(r e^{i\theta}) =\theta\in\leftclosed0,2\pi\rightopen$ 
be the argument of a complex number, and define the azimuth angle by
\[
\op{azim}(\orz,\v,\u,\w) = \op{arg}(\pi_\v(\w)/\pi_\v(\u)).
\]
When $x=(\v,\w)\in D_1$, set 
\[
 \op{azim}(x) = \op{azim}(\orz,\v,\w,\sigma(\v,\w)).
\]
Also, set
\[
 \Wdarto(x) = \{\p\in\ring{R}^3 \mid 0 < 
 \azim(\orz,\v,\w,\p) < \azim(\orz,\v,\w,\sigma(\v,\w)) \}.
\]

We may associate a hypermap with a fan by the following
construction.  Let $(V,E)$ be a fan.  Define a set of darts $D$ to be
the disjoint union of two sets $D_1,D_2$:
\begin{align*}
D_1 &= \{(\v,\w)\mid \{\v,\w\}\in E\},\\
D_2 &= \{(\v,\v) \mid \v\in V,\ \ E(\v) = \emptyset\},\textand \\
D\phantom{_2}   &= D_1\cup D_2,
\end{align*}
where $E(\v)=\{\w \mid \{\v,\w\}\in E\}$, the set of darts adjacent to $\v$.
Darts in $D_2$ are said to be \newterm{isolated} and darts in
$D_1$ are \fullterm{nonisolated}{isolated}.
Define permutations $n$, $e$, and $f$ on $D_1$ by
\begin{align*}n(\v,\w) &= (\v,\sigma(\v,\w)),\\
f (\v,\w) &= (\w,\sigma(\w)^{-1} \v),\\
e (\v,\w) &= (\w,\v).
\end{align*}
Define permutations $n,e,f$ on $D_2$ by making them degenerate on $D_2$:
\[ 
n (\v) = e(\v) = f(\v) = \v.
\] 
Set 
$\op{hyp}(V,E)=(D,e,n,f)$. 
If $(V,E)$ is a fan, 
then it is known that $\op{hyp}(V,E)$ is a hypermap~\cite{DSP}.

If $(V,E)$ is a hypermap, then we
define $X(V,E)$ to be the union of the sets $C(\ee)$, for $\ee\in E$,
and $Y(V,E) = \ring{R}^3 \setminus X(V,E)$ to be its complement.  There is
a well-defined mapping, $F\to U_F$, from the set of faces of $\op{hyp}(V,E)$ to the set of 
topological
connected components of $Y(V,E)$.  The component $U_F$ is characterized by the condition
\[
U_F \cap B(\v,\epsilon) \cap \Wdarto(x)\ne\emptyset,
\]
for all $x=(\v,\w)\in F$ and all sufficiently small $\epsilon>0$ (where $B(\v,\epsilon)$ is
an open ball of radius $\epsilon$ at $\v$).
 We write $\op{sol}(U_F)$ for the solid
angle of $U_F$ at $\orz$.  The sum of the solid angles of the topological components
of $Y(V,E)$ is $4\pi$.

\begin{definition}[local fan]\guid{FTNGOGF} \label{def:convex-local}
A triple $(V,E,F)$ is a \newterm{local fan} if the following conditions hold.
\begin{enumerate} 
\item $(V,E)$ is a fan.
\item  $F$ is a face of $H = \op{hyp}(V,E)$.
\item  $H$ is isomorphic to $\op{Dih}_{2k}$, where $k =
\card(F)$.
\end{enumerate}
\end{definition}
\indy{Index}{fan!local}%

\begin{definition}[localization]\guid{BIFQATK}
\hspace{-3pt}
 Let $(V,E)$ be a fan and let $F$ be
a face of $\op{hyp}(V,E)$.  Let
\begin{align*}
V' &= \{\v\in V \mid \exists~\w\in V.~~(\v,\w)\in F\}.\\
E' &= \{\{\v,\w\} \in E\mid (\v,\w)\in F\}.
\end{align*}
The triple $(V',E',F)$ is called the \newterm{localization} of $(V,E)$ along $F$.
\end{definition}
\indy{Index}{localization}%

The localization is a local fan.

\section{Strategies}

The strategy of the proof is to classify the hypermaps of 
fans $(V,E_2(V))$ for $V\in \CalV$ and to show that there are only two
possibilities: the contact hypermaps of the FCC and the HCP.  From
this, the proof of Fejes T\'oth's conjecture ensues.

The classification result is analogous to the one that we have 
obtained for tame hypermaps in \cite{DSP}.  
This suggests developing a proof along
exactly the same lines.  We define a 
collection of hypermaps with properties that are analogous to those
defining a tame hypermap and call them hypermaps with 
\fullterm{tame
  contact}{tame!contact}.  A computer generated classification of these hypermaps
gives only a few possibilities.  Those other than the FCC and
HCP hypermaps are eliminated by linear programming methods.

\begin{remark}[Lexell's theorem]
According to Lexell's theorem, for any two distinct non-antipodal points $\u,\v\in S^2(2)$,
the locus of points $\w\in S^2(2)$, along which
the spherical triangle with vertices $\u,\v,\w$ has given fixed area, is a circular arc
with endpoints at the antipodes of $\u$ and $\v$.

Lexell's theorem is an aid in finding the minimum of
  $\sol(U_F)$.
  It is a consequence of Lexell's theorem that the area of a spherical
  triangle (viewed as a function of its edge lengths) does not have a
  interior point local minimum, when the edge lengths are
  constrained to lie in given intervals.  For the spherical triangle of minimal area,
each  edge length is extremal.
\indy{Index}{Lexell's theorem}%
\end{remark}

\begin{remark}[Leech's solution of the Newton--Gregory problem]
During a famous discussion with Gregory, Newton asserted that 
if $V\subset S^2(2)$ is any packing, then $\card(V)\le 12$. That is, at most
twelve nonoverlapping balls can touch a fixed central ball.  
\indy{Index}{Newton--Gregory problem}%
\indy{Index}{Leech}%
\indy{Index}{thirteen-spheres problem}%
  
 Leech's proof of Newton's assertion is
noteworthy~\cite{Leech:1956:MG}.  Assuming the existence of a packing
$V\subset S^2(2)$ of cardinality thirteen, Leech associates a planar graph
$(V,E)$ with $V$, which is similar to our standard fan.  In
our notation, he estimates the solid angle of each topological
component $U_F$.  These solid angle estimates can be verified with Lexell's theorem.
Next, he classifies the planar graphs $(V,E)$ that satisfy various
combinatorial constraints obtained from the solid angle estimates. He
finds that no such planar graph exists, in confirmation of Newton's
assertion.
\end{remark}

\section{Main Estimate}

Let $(V,E)$ be a fan and let $F$ be a face of $\op{hyp}(V,E)$.
When $x=(\v,\w)$ is a dart in $\op{hyp}(V,E)$, define $h(x) = \normo{\v}/2$.
Define the weight function
\begin{equation}
  \tau(V,E,F) 
  = \sol(U_F) + (2- k(F))\sol_0
\label{eqn:tau-sol}
\end{equation}
where $\sol_0=3\arccos(1/3)-\pi\approx 0.55$ is the solid angle of a
spherical equilateral triangle with a side of arclength $\pi/3$, and
$k(F)$ is the cardinality of $F$.

\begin{definition}[$\CalV'$,~$E^+(V)$]\guid{ZTONFGU}
Let $V$ be a packing in $\ring{R}^3$.
  Write
$E^+(V)$ for the set of pairs $\{\u,\v\}\subset V$
  such that $2\le\norm{\u}{\v} <\sqrt8$.
Let $\CalV'\subset\CalV$ be the subset of packings $V\in\CalV$ that
satisfy the following two properties. 
\begin{enumerate}
\item  $(V,E^+(V))$ is a fan.
\item The graph $(V,E^+(V))$ is biconnected.
\end{enumerate}
\end{definition}
\indy{Notation}{V4@$\CalV'$ (twelve sphere configurations)}%
\indy{Notation}{E2@$E^+(V)$}%

The next theorem is the main estimate for packings with kissing number twelve.

\begin{theorem}[main estimate]\guid{VGJDQJV}\label{lemma:main-estimate-12}
Let $V'\in \CalV'$.  
Let $F$ be a face of $(V',E^+(V'))$ with at least three darts, 
and let $(V,E,F)$ be the localization of $(V',E^+(V'))$ along $F$.
Let $U=U_F$ be the topological component of
  $Y(V,E)$ corresponding to $F$. 
Let
  $S$ be the subset of $E$ consisting of edges $\{\v,\w\}$ such that $2h_0\le\norm{\v}{\w}<\sqrt8$.
  Set $r=\card(E\setminus S)$ and $s = \card(S)$.
Then
\begin{equation}\label{eqn:tau-tgt}
\tau(V,E,F) \ge \min(d_2(r,s),\op{tgt}),
\end{equation}
where 
\[ d_2(r,s) = 
\begin{cases}
0, & \text{if } (r,s)=(3,0),\\
0.103 (2-s) + 0.27 (r+2s-4), & \text{otherwise.}
\end{cases}
\] 
and
$\op{tgt}=1.541$.
\end{theorem}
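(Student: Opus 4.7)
The plan is to reduce the continuous inequality to a finite base-case verification by extremization, and then to verify the triangle case directly and extend to higher $k$ by induction. Since $(V,E,F)$ is a local fan and every $\v\in V$ lies on $S^2(2)$, the component $U_F$ is a spherical polygon whose $k=r+s$ edges have lengths lying in $\{2\}\cup [2h_0,\sqrt8)$: the $r$ contact edges are fixed at length $2$ by Lemma~\ref{lemma:gap}, while the $s$ edges in $S$ vary in $[2h_0,\sqrt8)$. Thus $\tau(V,E,F) = \sol(U_F) - (k-2)\sol_0$ is an analytic function of $s$ real parameters on a box. By the corollary of Lexell's theorem stated in the Lexell remark, the minimum of spherical area subject to such interval constraints occurs at the boundary, so it suffices to verify the bound when each length in $S$ is extremal, i.e., in $\{2h_0,\sqrt8\}$ (the upper endpoint being understood as a limit).

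The heart of the argument is the triangle case $k=3$. A spherical triangle on $S^2(2)$ with given edge lengths has its solid angle computable from the spherical law of cosines. The four subcases $(r,s)\in\{(3,0),(2,1),(1,2),(0,3)\}$ are each verified by explicit numerical computation at extremal edge lengths. The case $(3,0)$ is the equilateral contact triangle, giving $\sol(U_F)=\sol_0$ and $\tau=0$, matching $d_2(3,0)$ exactly. The three remaining subcases give strictly positive $\tau$ bounded below by $d_2(r,s)$, all of which lie below $\op{tgt}$.

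For $k\ge 4$, I would argue by induction using a non-contact diagonal. Choose a diagonal chord $\{\v_i,\v_j\}$ with length in $[2h_0,\sqrt8)$; this splits $F$ into a triangle $T$ of type $(r_T,s_T)$ and a $(k-1)$-gon $F'$ of type $(r',s')$, with $r_T+r'=r$ and $s_T+s'=s+2$. A direct calculation gives the identity $d_2(r_T,s_T)+d_2(r',s')=d_2(r,s)$, which together with the additivity $\tau(F)=\tau(T)+\tau(F')$ (both $\sol$ and the $\sol_0$ terms telescope across the shared diagonal) delivers the inductive step. When no diagonal of length less than $\sqrt8$ is admissible, the polygon $U_F$ is forced to occupy a large solid angle (the interior angles $\alpha_i$ together must already be large), and the cruder bound $\tau\ge \op{tgt}$ applies via a direct area estimate.

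The main obstacle is the rigorous numerical verification in the triangle base case, where the bounds $d_2(r,s)$ are tight and small absolute errors could invalidate them; this is the kind of step handled by computer-assisted interval arithmetic in the Flyspeck framework. A secondary technical difficulty is ensuring the inductive diagonal exists at all: this uses the biconnectedness of $(V',E^+(V'))$ in the definition of $\CalV'$, together with Lemma~\ref{lemma:gap} to exclude chord lengths in the forbidden interval $(2,2h_0)$, so that every diagonal chord is automatically either a contact edge or an element of $E^+(V')$ or strictly longer than $\sqrt8$.
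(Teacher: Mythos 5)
There is a genuine gap in your inductive step, and it is fatal to the $k\ge 4$ case as you have set it up. You propose to choose a diagonal chord $\{\v_i,\v_j\}$ of the face $F$ with length in $[2h_0,\sqrt 8)$. But no such chord exists: $E^+(V')$ is by definition the set of \emph{all} pairs in $V'$ at distance less than $\sqrt8$, and if $\v_i,\v_j$ are nonadjacent vertices of a face $F$ of the fan $(V',E^+(V'))$ they cannot form an edge of $E^+(V')$ (the fan intersection property would be violated by a blade crossing the interior of $U_F$). Hence every diagonal of $F$ has length $\ge\sqrt8$. This breaks the rest of the argument in two places. First, the claimed identity $d_2(r_T,s_T)+d_2(r',s')=d_2(r,s)$ presupposes a splitting edge whose length can be assigned to the $s$-class; $d_2$ has no slot for an edge of length $\ge\sqrt8$, so the accounting does not close. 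Second, your fallback --- that when no admissible short diagonal exists one has the crude bound $\tau\ge\op{tgt}$ --- is simply false: the quadrilateral faces in the FCC configuration have $\tau(V,E,F)$ on the order of $0.25$, far below $\op{tgt}=1.541$, even though all their diagonals are $\ge\sqrt8$.

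The paper avoids this by decomposing $U_F$ along the spherical Delaunay triangulation of $V'$ rather than along an arbitrary diagonal triangulation of the polygon. Two ingredients replace your missing pieces. The saturation of $V'$ (twelve points on $S^2(2)$ with no room for a thirteenth) bounds the circumradius of every Delaunay triangle by $\sqrt3$, which in turn bounds the long ($\ge\sqrt8$) edges from above; this is essential, because without an upper bound on the long edges the triangle area estimate would fail for thin triangles. And instead of $d_2$ the paper works with a three-parameter function $d_3(r,s,t)$ whose extra slot $t$ counts edges of length at least $3.0$; the combination rule when two triangles share an edge is a superadditivity \emph{inequality}, not the equality you assert, checked case by case, and the final bound follows because $d_3(r,s,0)=d_2(r,s)$. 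Your use of Lexell and the triangle base case is in the right spirit, but without the circumradius bound and the $t$-parameter the triangle estimates and the gluing step cannot be made to work.
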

\indy{Notation}{s@$s=\card(S)$}%
\indy{Notation}{tgt@$\op{tgt}=1.541$}%

\begin{proof}  
  We take the spherical Delaunay triangulation of the sphere $S^2(2)$ induced by $V'$.  
Triangles correspond to triangulated faces of the polyhedron obtained as 
the convex hull of $V'\subset\ring{R}^3$.  By standard estimates 
\cite{DSP}, by the length constraint $\norm{\v}{\w}<\sqrt8$, 
each edge of $E^+(V')$ is an edge of this polyhedron and 
gives an edge of a Delaunay triangle.  Thus $U_F$, up to a set of measure zero, 
is a disjoint union of cones over Delaunay triangles:
\[
U_F \approx \bigcup \op{aff}_+^0(\orz,\{\u_1,\u_2,\u_3\}).
\]
We may calculate the solid angle of $U_F$ and also $\tau(U_F)$ by this triangulation.

By the kissing number problem, $V'$, which has cardinality $12$, is a saturated spherical network on $S^2(2)$;
that is, there is no room to add a further point on $S^2(2)$ that has distance at least $2$ from all points
of $V'$.  By this saturation property, if $\{\u_1,\u_2,\u_3\}\subset V'$ is a Delaunay triangle, then
the circumradius of the simplex $\{\orz,\u_1,\u_2,\u_3\}$ is less than $2$.  Since $\u_i\in S^2(2)$, this
corresponds to a Euclidean triangular circumradius less than $\sqrt3$ for $\{\u_1,\u_2,\u_3\}$.

By construction,  every edge in $E\setminus S$ has length $2$.
Edges of Delaunay triangles that are not in $E$ have length at least $\sqrt8$.  The upper bound
on these edges will be determined by the circumradius constraint.

Fix attention on a single Delaunay triangle $\{\u_1,\u_2,\u_3\}$. Shifting notation,
let $r$ be the number of edges of the triangle of length $2$.  Let $s$ be the number of edges
of length in the range $\leftclosed 2h_0,3.0\rightopen$.  Let $t$ be the remaining number of edges; 
that is, the number of those of length at least $3.0$.  We have $r+s+t=3$.

Define $d_3:\ring{N}^3\to\ring{R}$ by
\[
d_3(r,s,t) = 
\begin{cases}
0, & \text{if } (r,s,t)=(3,0,0),\\
0, &\text{if } (r,s,t)=(2,0,1)\\
0.103 (2 - s) + 0.27 (r+2 s+2 t-4),&\text{otherwise. }
\end{cases}
\]
Note that $d_3(r,s,0)=d_2(r,s)$.

We claim that the solid angle $A=\op{sol}(\orz,\op{aff}_+^0(\orz,\{\u_1,\u_2,\u_3\}))$ satisfies
\begin{equation}\label{eqn:d3}
A\ge \sol_0 + d_3(r,s,t).
\end{equation}
To show this, we work case by case in the parameters $(r,s,t)$.
Suppose first that $(r,s,t)\ne (1,1,1)$.
If $t>0$, we use the triangle circumradius bound $\sqrt3$ 
to give an upper bound $b_{r,s,t}$ on the edges of length at least $3.0$.
Then we use Lexell's theorem to calculate bounds on the area of the Delaunay triangle,
given the ranges on edges, and in each case we find that the area estimate is satisfied.

We work a few cases explicitly.
For example if $(r,s,t)=(3,0,0)$, then the solid angle is exactly $\sol_0$ and the constant $d_3$ is $0$.
The estimate is sharp in this case.
If $(r,s,t)=(2,0,1)$, then the circumradius-derived upper bound on the long edge is $\sqrt{32/3}$, the solid angle is at least
$\sol_0$, and the constant $d_3$ is $0$.  The estimate is sharp in this case as well.
If $(r,s,t)=(0,0,3)$, then an upper bound on the long edges is $3.27$ (the circumradius of a triangle
with sides $3.0,3.0,3.27$ is greater than $\sqrt3$), by Lexell the area of a spherical triangle with three (Euclidean) edges
in the interval $[3.0,3.27]$ is at least 
\[
\pi/2 > \sol_0 + d_3 = \sol_0 + 3 (0.27).
\]
By a simple \cc{HFBBNUL}{This was done in Mathematica.  Footnotes give randomly assigned tracking numbers 
that can be used to locate the details of the calculation at the website~\cite{website:FlyspeckProject}.}, 
the other cases have been checked in a simlar way.

In the case $(r,s,t)=(1,1,1)$, there is an edge length in the interval $\leftclosed 2 h_0,3.0\rightopen$.
If the upper bound on the longest edge is at most $3.45$, then the lower bound is calculated by Lexell
following the procedure just described.  Again, if the lower bound on the midlength edge
is at least $2.6$, then the procedure gives the bound.  However, if both of these conditions fail, we
have a triangle whose three edges fall in the intervals $[2,2]$, $[2h_0,2.6]$, $[3.45,2\sqrt3]$,
respectively.  Every
triangle with these edge bounds is obtuse and
has circumradius at least $\sqrt3$.  Thus, the situation is vacuous, and the bound \eqref{eqn:d3}
holds in the case $(r,s,t)=(1,1,1)$.


Next, we observe that the function $d_3$ is superadditive.  If we combine two adjacent regions with parameters
$(r_1,s_1,t_1)$ and $(r_2,s_2,t_2)$, the combined region has parameters $(r',s',t')$, where
\[
(r',s',t') =
\begin{cases} (r_1+r_2,s_1+s_2,t_1+t_2-2) &\text{ if the shared edge has length at least } 3.0\\
(r_1+r_2,s_1+s_2-2,t_1+t_2) &\text{ if the shared edge has length in } \leftclosed 2h_0,3.0\rightopen.
\end{cases}
\]
Then we can verify by a routine case-by-case calculation that
\[
d_3(r_1,s_1,t_1) + d_3(r_2,s_2,t_2) \ge d_3(r',s',t').
\]
(Note that $(r_i,s_i,t_i)\ne (3,0,0)$ and that we cannot have $(r_1,s_1,t_1)=(r_2,s_2,t_2)=(2,0,1)$ because
a quadrilateral of side length $2$ always has a diagonal of length at most $\sqrt8$.)

To complete the proof, we show that the bound \eqref{eqn:d3} gives the bound of the lemma.
Let $A_i$, for $i=1,\ldots (k-2)$, be the areas of the Delaunay triangles in the partition of $U_F$.
We write $(r_i,s_i,t_i)$ for the parameters of $A_i$ and return to the earlier notation of $(r,s)$ 
as the parameters defined
in the statement of the lemma.
By superadditivity, we have
\begin{align*}
\tau(U_F) &= (2 - k(F))\sol_0 + \sol(U_F)\\ 
    &= (2 - k(F))\sol_0 + \sum_i A_i  \\
   &= \sum_i (-\sol_0 + A_i)\\
   &\ge \sum_i d_3(r_i,s_i,t_i) \\
   &\ge d_3(r,s,0)\\
   &=d_2(r,s).
\end{align*}
The lemma ensues.\qed
\end{proof}

\section{Biconnected Fans}

We may create  fans that are biconnected graphs in the same way as in
\cite{Hales:2006:DCG}.  Here is a review
of the construction.

\begin{lemma}\guid{NJFWRPQ}\label{lemma:V'-bi} 
Let $V\in \CalV$.  Then there exists $V'\in \CalV'$ and a bijection 
 $\phi:V'\mapsto V$ that induces a bijection
  of contact graphs:
\[
\phi_*:(V,E_2(V)) \cong (V',E_2(V')).
\]
\end{lemma}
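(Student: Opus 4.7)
The plan is to construct $V'$ as a small, carefully chosen perturbation of the points of $V$ along $S^2(2)$. The perturbation preserves all contact distances (so the contact-graph isomorphism $\phi$ is forced by the labeling) and creates whatever extra edges in $E^+$ are required to make $(V', E^+(V'))$ a biconnected fan. The key degree of freedom is that contact pairs must keep separation exactly $2$, whereas Lemma~\ref{lemma:gap} only forces non-contact pairs to stay at distance $\ge 2h_0$, and the admissible range $[2h_0, \sqrt 8)$ for membership in $E^+$ is a full open interval.

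Parametrize configurations by labelling $V = \{\v_1, \ldots, \v_{12}\}$ and allowing each $\v_i$ to slide on $S^2(2)$ subject to the contact constraints $\norm{\v_i}{\v_j} = 2$ for $\{\v_i,\v_j\} \in E_2(V)$. The admissible set (configurations with every non-contact pair still at distance $\ge 2h_0$) is an open neighborhood of $V$ in this algebraic subvariety. Verifying the fan axioms of Definition~\ref{def:fan} for $(V', E^+(V'))$ is then routine: cardinality and origin axioms are automatic, the nonparallel axiom holds because any pair at distance $<\sqrt 8$ on $S^2(2)$ cannot be antipodal, and the intersection axiom fails only on a closed nowhere-dense sublocus, so a generic small perturbation restores it.

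Biconnectedness is achieved by iterated perturbations along the lines of the construction in~\cite{Hales:2006:DCG}. If $(V, E^+(V))$ is already biconnected, take $V' = V$. Otherwise, select two vertices $\u, \w \in V$ lying in distinct biconnected blocks; by the failure of biconnectedness we may arrange $\norm{\u}{\w} \ge \sqrt 8$. Apply a perturbation that rotates $\u$ and $\w$ along $S^2(2)$, about the axes through their respective contact neighbors (each such rotation preserves exactly the contact distances), chosen so as to strictly decrease $\norm{\u}{\w}$ until it lies in $[2h_0, \sqrt 8)$. Taking the step small enough keeps every other pairwise distance in its original regime, so the new configuration is still admissible but has the extra $E^+$ edge $\{\u, \w\}$. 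Iterate; the process terminates after at most $\binom{12}{2}$ steps. The terminal configuration $V'$ belongs to $\CalV'$, and the bijection $\phi: V' \to V$ recording the trajectory induces the required contact-graph isomorphism.

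The main obstacle is justifying a single perturbation step: for any two vertices $\u, \w$ in distinct biconnected blocks one must exhibit an admissible motion that strictly decreases $\norm{\u}{\w}$ while preserving all contact distances and keeping every other non-contact pair at distance $\ge 2h_0$. A dimension count on the tangent space to $S^2(2)$ at $\u$, quotiented by the linear constraints from the contacts of $\u$ (and symmetrically for $\w$), produces admissible velocity vectors with nonzero component in the direction $\w - \u$ in all nondegenerate cases, and the strict inequality $2h_0 > 2$ provides the positive margin against the separation constraint needed to take a step of positive size. Handling the remaining nongeneric configurations is the technical content of~\cite{Hales:2006:DCG}.
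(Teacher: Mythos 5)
Your overall strategy (deform $V$ within $\CalV$, preserving contacts, to create the extra $E^+$ edges needed for biconnectedness) is the same as the paper's, and both arguments lean on \cite{Hales:2006:DCG} for the deformation technology. But two of your steps have genuine problems.

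First, your treatment of the fan intersection axiom is wrong in its logic. You assert that the intersection axiom ``fails only on a closed nowhere-dense sublocus, so a generic small perturbation restores it.'' Failure of the intersection axiom means two blades $C(\ee)$, $C(\ee')$ cross, i.e.\ the corresponding geodesic arcs on $S^2(2)$ intersect transversally; this is an \emph{open}, perturbation-stable condition, not a nowhere-dense one. If it failed, no generic perturbation would repair it. The correct argument is geometric and holds for every $V\in\CalV$: the cited lemma from \cite{Hales:2006:DCG} shows that blades of edges of length less than $\sqrt8$ can only cross when the two edges are the diagonals of a quadrilateral face of $(V,E_2(V))$, and that case is excluded because such a face is a spherical rhombus of side $2$, one of whose diagonals necessarily has length at least $\sqrt8$ and hence is not in $E^+(V)$. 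Your proposal is missing this argument entirely.

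Second, your perturbation step is underspecified where it matters. Rotating a single vertex $\u$ of $S^2(2)$ about an axis through the origin preserves its distance to a fixed contact neighbor only if that neighbor lies on the axis; two contact neighbors of $\u$ cannot both lie on an axis through the origin (they would be antipodal on $S^2(2)$, forcing $\u=\orz$), and with three contact neighbors in general position $\u$ is rigid. So ``rotate $\u$ about the axes through its contact neighbors'' does not define an admissible motion for most vertices, and your closing dimension count does not resolve this. The paper sidesteps the difficulty for the connectivity step by moving an entire combinatorial component \emph{rigidly} toward another component (all internal contact distances are then preserved automatically, and there are no cross-component edges to break), reserving local deformations only for the articulation nodes in the final biconnectivity step. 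You should restructure your deformation along those lines rather than vertex by vertex.
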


\begin{proof}
  Begin with the  fan $(V,E_2(V))$.  

  \claim{We claim that $(V,E^+(V))$ is a fan.} Indeed, it is checked by
  \cite[Lemma~4.30]{Hales:2006:DCG} that the blades satisfy the
  intersection property of fans, except possibly when two new blades
  are the diagonals of a quadrilateral face in $(V,E_2(V))$.  (The
  cited lemma uses the constant $2.51$ instead of $2h_0$, but this
  does not affect the reasoning of the lemma.) 
We may directly rule out the possibility of a quadrilateral face as follows.
 The diagonals of a quadrilateral face
  in $(V,E_2(V))$ is a spherical rhombus and one of its diagonals is
  necessarily at least $\sqrt8$ (with extreme case a square of side
  $2$).  The other fan properties are easily checked.

  If the hypermap $\op{hyp}(V,E^+(V))$ is not connected,
  the set of nodes $V_1\subset V$ in one combinatorial component can
  be moved closer to another combinatorial component until a new edge
  is formed.  This can be done in a way that the deformation of $V$
  remains in $\CalV$ and no new edges of length at most $2h_0$ are formed.
  Continuing in this fashion, a connected hypermap is obtained.

A biconnected hypermap is produced by  further
 deformations of the fan around each \newterm{articulation node}  (that is, a node 
whose deletion increases the number of combinatorial components).
\qed\end{proof}

\begin{definition}[$D_U$,~$m_U$,~$r_U$,~$s_U$,~$k_U$,~$\tau(U)$]
  Let $V\in \CalV$.  Let $U$ be a topological component of
  $Y(V,E_2(V))$ and let $D_U$ be the set of all darts of $\op{hyp}(V,E_2(V))$
 that lead into
  $U$.   For each
  $x\in D_U$, let $m(x) >0$ be the smallest positive integer such that
  $f^{m} x$ and $x$ lie at the same node.  Let $m_U$ be the maximum of
  $m(x)$  as $x$ runs over $D_U$.  The constant $m_U$ can be viewed as
  a \newterm{simplified face size}.  
Let $r_U$ be the number of nonisolated darts in $D_U$, and let $2+s_U$ be twice the
  number of combinatorial components of $\op{hyp}(V,E_2(V))$ that
  meet $D_U$.  (In particular, $s_U$ is even and $s_U=0$ exactly when $D_U$ lies in a 
single combinatorial component of the hypermap.)
Let $k_U=r_U+s_U$.  Overloading the symbol $\tau$, we
  set $\tau(U) = \sol(U) +  (2-k_U)\sol_0$.   (If a single  face $F$ leads into
  $U$ and if the face is simple, then the overloaded notation is consistent
with the earlier notation: $\tau(U) = \tau(V,E_2(V),F)$.)
  \indy{Index}{fan}%
  \indy{Notation}{m@$m_U$ (simplified face size)}%
  \indy{Notation}{s@$s_U$ (integer invariant of component)}%
  \indy{Notation}{r@$r_U$ (integer invariant of component)}%
  \indy{Notation}{k@$k_U$ (integer invariant of component)}%
  \indy{Notation}{D@$D_U$ (the set of darts leading into $U$)}%
\end{definition}

\begin{lemma}\guid{NOKWBKT}\label{lemma:tauU'}
  Let $V\in \CalV'$.
Let $U$ be a topological component of
  $Y(V,E_2(V))$.   Then $\tau(U)\ge \min(d_2(r_U,s_U),\op{tgt})$.
\end{lemma}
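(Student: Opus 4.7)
The plan is to apply Theorem~\ref{lemma:main-estimate-12} to each face of $\op{hyp}(V,E^+(V))$ contained in $U$, and then to assemble the contributions using the additivity of $\tau$. Since $E_2(V)\subset E^+(V)$, the component $U$ of $Y(V,E_2(V))$ decomposes, up to a set of measure zero, as a disjoint union $U=\bigsqcup_{i=1}^{n}U_{F_i}$, where $F_1,\dots,F_n$ are the faces of $\op{hyp}(V,E^+(V))$ with $U_{F_i}\subset U$. Biconnectedness of $(V,E^+(V))$ ensures each $F_i$ is a simple face with $k(F_i)\ge 3$, so the main estimate applies. Write $(r_i,s_i)$ for the parameters of $F_i$ from Theorem~\ref{lemma:main-estimate-12}, and let $m$ be the number of edges in $E^+(V)\setminus E_2(V)$ interior to $U$. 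Because the boundary of $U$ in $Y(V,E_2(V))$ is made up exclusively of contact edges, every non-contact edge bordering any $F_i$ must be shared with a second $F_j\subset U$; hence $\sum_i r_i=r_U$ and $\sum_i s_i=2m$.

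Using $k_U=r_U+s_U$ and $\sum_i k(F_i)=r_U+2m$, one obtains the identity
\[
\tau(U)\;=\;\sum_{i=1}^{n}\tau(V,E^+(V),F_i)\;+\;(2+2m-2n-s_U)\,\sol_0.
\]
The intersection $U\cap S^2(2)$ is an open $2$-manifold (as a component of an open subset of the sphere), so its topological boundary consists of $\ell$ disjoint contact-edge loops in $(V,E_2(V))$. The manifold property forces each boundary vertex to lie on exactly one loop, and a planar Euler-characteristic count on $U\cap S^2(2)$ then yields $\ell\le m-n+2$. Since each boundary loop is contained in a single connected component of the contact graph $(V,E_2(V))$, the number $c$ of combinatorial components of $\op{hyp}(V,E_2(V))$ meeting $D_U$ satisfies $c\le\ell\le m-n+2$. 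Thus $s_U=2(c-1)\le 2(m-n+1)$, and the $\sol_0$-coefficient above is non-negative.

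Applying the main estimate gives $\tau(V,E^+(V),F_i)\ge \min(d_2(r_i,s_i),\op{tgt})$. If some $F_j$ satisfies $\tau(V,E^+(V),F_j)\ge\op{tgt}$, then together with the bound $d_2\ge 0$ on the remaining faces and the non-negative $\sol_0$-term we conclude $\tau(U)\ge\op{tgt}$. Otherwise $\tau(U)\ge\sum_i d_2(r_i,s_i)$. The non-exceptional branch $\phi(r,s):=0.103(2-s)+0.27(r+2s-4)$ of $d_2$ is linear in $(r,s)$, and a direct computation shows $\sum_i\phi(r_i,s_i)=\phi(r_U,\,2m-2n+2)$; since $\phi$ is strictly increasing in $s$ and $s_U\le 2m-2n+2$, this yields $\phi(r_U,2m-2n+2)\ge\phi(r_U,s_U)$. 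Because $d_2\ge\phi$ everywhere (strictly only at $(3,0)$, where $d_2=0>\phi=-0.064$), it follows that $\sum_i d_2(r_i,s_i)\ge d_2(r_U,s_U)$ whenever $(r_U,s_U)\ne(3,0)$; the exceptional case $(r_U,s_U)=(3,0)$ is settled by the trivial bound $\sum_i d_2(r_i,s_i)\ge 0=d_2(r_U,s_U)$.

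The main obstacle is the topological step bounding $c\le\ell\le m-n+2$; this relies on recognizing that $U\cap S^2(2)$ is a genuine $2$-manifold (so pinch points are ruled out and each boundary vertex is on a unique loop) and that each boundary loop of $U$ sits inside a single connected component of the contact graph. Once those observations are in place the remaining calculation is a matter of linear arithmetic, paralleling the additive Delaunay-triangle argument used to prove Theorem~\ref{lemma:main-estimate-12} itself.
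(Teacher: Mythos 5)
Your proposal follows essentially the same route as the paper: decompose $U$ (up to measure zero) into the components $U_F$ of $Y(V,E^+(V))$, invoke Theorem~\ref{lemma:main-estimate-12} on each localized face, and combine via additivity of $\tau$ together with superadditivity of $d_2$. The only difference is one of bookkeeping — the paper asserts exact additivity $\tau(U)=\sum_F\tau(V,E^+(V),F)$ and superadditivity of $d_2$ outright, whereas you retain an explicit non-negative $\sol_0$-surplus controlled by an Euler-characteristic count and verify the $d_2$ inequality through the linearity of its generic branch — and this suffices for the stated lower bound.
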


\begin{proof}
Up to a null set (given by the finite union of blades $C^0(\ee)$ for
$\ee\in E^+(V)\setminus E_2(V)$), the region $U$ is the union of topological
components $U_F$ of $Y(V,E^+(V))$, which are in bijection with the faces
$F$ of $\op{hyp}(V,E^+(V))$.  The function $\tau(U)$ is additive:
\begin{equation}\label{eqn:tau-additive}
\tau(U) = \sum_{U_F\subset U} \tau(V,E^+(V),F).
\end{equation}
By the biconnectedness of $(V,E^+(V))$, each value $\tau(V,E^+(V),F)$ is the
same before and after localization.  (Localization replaces $(V,E^+(V),F)$ with $(V',E',F)$
where $V'\subset V$, $E'\subset E^+(V)$, and $(V',E',F)$ is a local fan.)
Lemma~\ref{lemma:main-estimate-12} gives a lower bound on the
constants $\tau(V,E^+(V),F)$.  The constants $d_2(r_U,s_U)$ are superadditive:
\[
d_2(r_U,s_U) \le \sum_{U_F\subset U} d_2(r(V,F),s(V,F)),
\]
where $s(V,F)$ is the cardinality of the set of edges of $E^+(V)\setminus
E_2(V)$ that meet $F$, and $r(V,F) = \card(F)-s(V,F)$.  Thus, the 
lower bound on $\tau(U)$ follows from the main estimate
(Theorem~\ref{lemma:main-estimate-12}).
\qed\end{proof}

\begin{lemma}\guid{UCEUZYO}\label{lemma:uce} 
Let $V\in \CalV'$.  Then
\[
\sum_{U\in [Y(V,E_2(V))]} \tau(U) = (4\pi - 20\sol_0),
\]
where $[Y]$ is the set of topological components of $Y=Y(V,E_2(V))$.
\end{lemma}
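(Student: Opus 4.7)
The strategy is to decompose $\tau(U)=\sol(U)+(2-k_U)\sol_0$ and evaluate the geometric and combinatorial parts of $\sum_U \tau(U)$ separately. The solid-angle part sums to $4\pi$ because the topological components of $Y(V,E_2(V))$ partition $\ring{R}^3$ up to the measure-zero set $X(V,E_2(V))$, so their solid angles at the origin tile the unit sphere. Thus it suffices to prove the purely combinatorial identity $\sum_U(k_U-2)=20$.

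Split $\sum_U k_U=\sum_U r_U+\sum_U s_U$. Each nonisolated dart of $\op{hyp}(V,E_2(V))$ lies in a unique hypermap face, and each hypermap face leads into a unique topological component, so $\sum_U r_U=2|E_2(V)|$. For $s_U=2(\alpha_U-1)$, where $\alpha_U$ denotes the number of combinatorial components of $\op{hyp}(V,E_2(V))$ that meet $D_U$, the key step is a face-boundary count: the boundary $\partial U$ of each topological component decomposes as a disjoint union of closed face-walks, one per combinatorial component of $(V,E_2(V))$ touching $\partial U$. Hence $\sum_U \alpha_U$ equals the total number of hypermap faces.

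Next, I would compute the total number of hypermap faces componentwise via Euler's formula on $S^2(2)$: each nontrivial component $C$ contributes $f_C=2-v_C+e_C$ spherical faces (each corresponding to a single hypermap face), while each isolated vertex contributes one degenerate face. Summing and combining with Euler's formula $12-|E_2(V)|+p=1+c$ for the full, possibly disconnected embedding (where $p$ counts topological components and $c$ counts combinatorial components of the hypermap, including isolated vertices), one obtains $\sum_U\alpha_U=p+c-1$ and hence $\sum_U s_U=2c-2$. Substituting,
\[
\sum_U(k_U-2)=2|E_2(V)|+2c-2-2p=2|E_2(V)|+2c-2-2(1+c-12+|E_2(V)|)=20,
\]
which together with $\sum_U\sol(U)=4\pi$ delivers the lemma.

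The main obstacle is the face-boundary decomposition showing that $\sum_U\alpha_U$ equals the number of hypermap faces. This is the hypermap-theoretic incarnation of Euler's formula for disconnected planar embeddings, and I would cite it from \cite{DSP}. One must also take care to account for isolated vertices of $(V,E_2(V))$, which can occur even when $V\in\CalV'$ since biconnectedness is assumed for $(V,E^+(V))$ and not for the contact graph $(V,E_2(V))$ itself; the Euler count above is robust to these degenerate components provided each is treated as contributing one node, zero edges, and one degenerate face to the hypermap.
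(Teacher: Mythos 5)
Your argument is correct, but it takes a genuinely different route from the paper's. The paper never applies Euler's formula to the possibly disconnected contact graph directly: it works with the biconnected fan $(V,E^+(V))$, uses the equality case $\CalL(V)=12$ of Lemma~\ref{lemma:dsp} (all twelve points have norm exactly $2$) together with the identity of \cite[8.2.3]{DSP} to obtain $\sum_F\tau(V,E^+(V),F)=4\pi-20\sol_0$ over the faces of the \emph{connected} hypermap $\op{hyp}(V,E^+(V))$, and then regroups those faces into the components of $Y(V,E_2(V))$ via the additivity relation~\eqref{eqn:tau-additive}. You instead split $\tau(U)$ into its solid-angle and combinatorial parts and prove $\sum_U(k_U-2)=20$ from scratch; your arithmetic is right (it gives $20$ both for the icosahedral contact graph, with $c=1$, $e=30$, $p=20$, and for the edgeless extreme, with $c=12$, $e=0$, $p=1$, where $s_U=22$ for the single component). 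What your approach buys is independence from the DSP weight identity and a transparent explanation of why $s_U$ is defined as ``twice the number of combinatorial components minus two.'' What it costs is that the whole proof rests on the incidence count $\sum_U\alpha_U=\#f$, i.e.\ that each pair consisting of a combinatorial component and a topological component it borders accounts for exactly one face orbit of $\op{hyp}(V,E_2(V))$. That statement is true for fans and is part of the face/component correspondence developed in \cite{DSP}, so the citation is legitimate, but recognize that it is the crux rather than a routine step: it needs the bijection between faces of a connected fan's hypermap and the topological components of its complement, applied component by component, together with the observation that a connected $U$ lies in exactly one component of the complement of each individual combinatorial component's blade set. The paper's detour through $E^+(V)$ is precisely a device for sidestepping this disconnected-case bookkeeping.
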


\indy{Notation}{L1@$\CalL(V)$ (estimation of a packing)}%
\begin{proof} For a packing of twelve points $V\subset S^2(2)$, we have
$12=\CalL(V)$, where $\CalL(V)$ is the left-hand side of equation \eqref{eqn:L}.   
From this equality, following~\cite[8.2.3]{DSP}, we have
\[
  \sum_F \tau (V,E^+(V),F) = (4\pi - 20\sol_0),
\]
the sum running over faces of $\op{hyp}(V,E^+(V))$.
The result follows by additivity~\eqref{eqn:tau-additive}.
\qed\end{proof}

The constant $\op{tgt}=1.541$ is slightly
larger than $(4\pi-20\sol_0)\approx 1.54065$.  The constants $d_2(r_U,s_U)$ are nonnegative, so that
$\tau(U)$ is as well.  This means that for every subset
$A$ of $[Y(V,E_2(V))]$, we have
\begin{equation}\label{eqn:subtgt}
\sum_{U\in A} \tau(U) < \op{tgt}.
\end{equation}

\begin{lemma}[biconnected]\guid{BTZPFMU}\label{lemma:biconnected}
  Let $V\in \CalV$.  Then $\op{hyp}(V,E_2(V))$ is biconnected.
\end{lemma}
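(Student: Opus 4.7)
The plan is to first apply Lemma~\ref{lemma:V'-bi} to assume $V\in\CalV'$, so that $(V,E^+(V))$ is biconnected. Supposing for contradiction that $\op{hyp}(V,E_2(V))$ is not biconnected, I aim to produce, for each of the two failure modes (the contact graph is disconnected, or else it is connected with an articulation vertex), either a single topological component $U$ of $Y(V,E_2(V))$ with $d_2(r_U,s_U)\ge\op{tgt}$ or else an aggregate over components with $\sum_U d_2(r_U,s_U)>4\pi-20\sol_0$, either of which contradicts Lemma~\ref{lemma:tauU'} combined with Lemma~\ref{lemma:uce} and equation~\eqref{eqn:subtgt}.

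If $(V,E_2(V))$ is disconnected, pick two combinatorial components and let $U^*$ be the topological component of $Y(V,E_2(V))$ whose dart set meets both, so $s_{U^*}\ge 2$ by definition. When each of the two components contributes at least a triangular boundary (length $\ge 3$) to $\partial U^*$, then $r_{U^*}\ge 6$ and $d_2(r_{U^*},2)\ge 0.27\cdot 6=1.62>\op{tgt}$, giving an immediate contradiction. The residual sub-cases---one of the components reduces to a single isolated vertex or to a 2-vertex single contact edge---are handled by summing $d_2(r_U,s_U)$ over all topological components of $Y(V,E_2(V))$. Using Euler's formula applied to the non-degenerate component together with the degree bound $\deg\le 4$ in $E_2(V)$ (which holds because a fifth kissing neighbor would force a non-contact pair at Euclidean distance in the forbidden interval $(2,2h_0)$), a direct arithmetic in each feasible sub-case shows that the accumulated $d_2$-sum always exceeds $4\pi-20\sol_0$.

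If instead $(V,E_2(V))$ is connected with an articulation vertex $\v$, splitting $V\setminus\{\v\}$ into contact components $V_A\sqcup V_B$, then biconnectedness of $(V,E^+(V))$ in $\CalV'$ forces at least one bridging edge $\{\w_A,\w_B\}\in E^+(V)\setminus E_2(V)$ with $\w_A\in V_A$ and $\w_B\in V_B$. Each such bridging edge lies in two faces of $\op{hyp}(V,E^+(V))$, each carrying $s$-value at least $1$. A crucial observation is that any common contact neighbor of $\w_A$ and $\w_B$ must lie in $(V_A\cup\{\v\})\cap(V_B\cup\{\v\})=\{\v\}$, so at most one of the two bridging faces can be the small triangle $\v\w_A\w_B$; the other one is forced to traverse a contact path through $\v$ of length $\ge 3$, contributing additional $d_2$-mass. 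Combining the main estimate (Theorem~\ref{lemma:main-estimate-12}) applied to each of these bridging faces with the additivity $\tau(U)=\sum_{U_F\subset U}\tau(V,E^+(V),F)$ used in the proof of Lemma~\ref{lemma:tauU'}, a bookkeeping constrained by $\deg\le 4$ and the 12-vertex Euler identity pushes the total past $4\pi-20\sol_0$.

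The main obstacle is the articulation case, because the corresponding topological component of $Y(V,E_2(V))$ has $s_U=0$ (the contact graph is still connected), so Lemma~\ref{lemma:tauU'} applied directly at the $E_2$-level only gives a bound of the form $d_2(r_U,0)$, which by itself need not exceed the budget. The resolution is to refine the estimate by working inside the finer hypermap $\op{hyp}(V,E^+(V))$, where each bridging edge automatically imposes $s\ge 1$ on its two adjacent faces, and to combine those boosted contributions with the structural observation that the only possible common contact neighbor of $\w_A$ and $\w_B$ is $\v$ itself, together with the kissing degree bound, in order to rule out every residual local configuration.
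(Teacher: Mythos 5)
Your overall framework (reduce to $V\in\CalV'$ via Lemma~\ref{lemma:V'-bi}, then derive a contradiction with \eqref{eqn:subtgt} from the weights $d_2(r_U,s_U)$) matches the paper, and your disconnected case is essentially the paper's observation that an interior node in a second combinatorial component forces $s_U\ge 2$ and $d_2(6,2)=1.62>\op{tgt}$ (though your ``residual sub-cases'' are left as unexamined arithmetic). The genuine gap is in the articulation-vertex case, and it is not a gap that more bookkeeping can fill: the mechanism you propose produces \emph{no} extra weight. The function $d_2$ is \emph{exactly} additive under merging two faces across an edge of $S$: for $(r',s')=(r_1+r_2,s_1+s_2-2)$ one has $d_2(r_1,s_1)+d_2(r_2,s_2)=0.103(4-s_1-s_2)+0.27(r_1+r_2+2s_1+2s_2-8)=d_2(r',s')$. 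This equality is precisely what makes the superadditivity step in Lemma~\ref{lemma:tauU'} work, and it means that refining $U$ into faces of $\op{hyp}(V,E^+(V))$ and crediting each bridging face with $s\ge 1$ returns exactly $d_2(r_U,0)$ after summation --- the long internal edges redistribute the weight but do not increase it. For the critical configuration (a hexagon with a pendant contact vertex inside, $r_U=8$, $s_U=0$) this gives only $d_2(8,0)=1.286$, short of $4\pi-20\sol_0\approx 1.5407$ by about $0.255$, and no amount of analysis \emph{inside} $U$ closes that gap.

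The paper closes it with three ingredients absent from your sketch. First, a computer-verified bound $\op{azim}(\orz,\v,\u,\w)<2\pi/5$ for an interior node $\v$ and consecutive boundary nodes $\u,\w$, which forces $m_U\ge 6$ whenever $D_U$ is not a simple face. Second, the classification of non-simple configurations with $m_U\ge 6$ from \cite[Fig.~12.1]{Hales:2006:DCG}, which together with $d_2(9,0)>\op{tgt}$ and $d_2(6,2)>\op{tgt}$ pins the surviving case down to $m_U=6$, $\tau(U)\ge d_2(8,0)$; without this input you have no control on $r_U$. Third --- and this is the step your ``bookkeeping'' would have to reinvent --- a \emph{global} argument harvesting weight from outside $U$: since $2\pi$ is not an integer multiple of the dihedral angle $\arccos(1/3)$ of the regular tetrahedron, every node meets a nontriangular face; the five nodes not met by $D_U$ therefore force at least one additional pentagon or two additional quadrilaterals, contributing $d_2(5,0)=0.476$ or $2\,d_2(4,0)=0.412$, either of which pushes the total past $\op{tgt}$. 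Your proposal looks only at faces adjacent to the bridging edge and so cannot reach this conclusion. (Minor additional points: your degree bound $\deg\le 4$ needs the circle-of-radius-$\sqrt 3$ angle count rather than the informal ``forbidden interval'' claim, and the disconnected residual cases where a component contributes fewer than three darts are asserted rather than proved.)
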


\begin{proof}
  By Lemma~\ref{lemma:V'-bi}, we may replace $V$ with a new set in
  $\CalV$ if necessary so that $(V,E^+(V))$ is a biconnected fan.
   We  show that the smaller
  fan $(V,E_2(V))$ is also biconnected.

  Let $U$ be a topological component of $Y(V,E_2(V))$.  Lemma~\ref{lemma:tauU'}
   implies that $\tau(U)\ge \min(d_2(r_U,s_U),\op{tgt})$.

   \claim{We claim that if $m_U\le 5$, then $D_U$ is a simple face.} Otherwise,
   either $D_U$ is a face that is not simple, or it consists of more than
   one face.  Either way, some node $\v$ lies in the interior to the
   $m_U$-gon.  Let $\u,\w$ be consecutive nodes around the $m_U$-gon.
   By a \cc{6621965370}
  {(Mathematica)} the angles
   $\op{azim}(\orz,\v,\u,\w)$ are each less than $2\pi/5$. The angles
   around $\v$ cannot sum to $2\pi$ as required.

   \claim{We claim that $D_U$ is a simple face.}  Otherwise, assume
   for a contradiction that $D_U$ is not simple, $m_U\ge 6$, and
   $d_2(r_U,s_U)<\op{tgt}$.  From the classification of
   \cite[p.~126,~Fig.~12.1]{Hales:2006:DCG}, and the inequalities
   $d_2(9,0) > \op{tgt}$, $d_2(6,2) > \op{tgt}$, it follows that 
    $m_U=6$ and $\tau(U)\ge d_2(8,0)$.  The set $D_U$ meets seven
   nodes: the six nodes counted by $m_U$ and a node in the interior of
   the hexagonal arrangement.  At each node there is a face of the
   hypermap $\op{hyp}(V,E_2(V))$ that is not a triangle, because
   $2\pi$ is not an integer multiple of the dihedral angle of a
   regular tetrahedron.  As the packing has twelve nodes in all, five
   nodes remain, each meeting a nontriangular topological component of
   $Y(V,E_2(V))$.  Thus, by counting nodes, the hypermap has at least
   one pentagon or two quadrilaterals.  We find that $\sum_{U}
   \tau(U)$ is at least
\[
d_2(8,0) + d_2(5,0) > \op{tgt}, \text{ or }\quad d_2(8,0) + 2 d_2(4,0) > \op{tgt},
\]
which is contrary to~\eqref{eqn:subtgt}.
Thus, $D_U$ is a simple face.
\indy{Index}{weight!total}%
\indy{Index}{weight}%

\claim{We claim that the hypermap is biconnected.}  Otherwise, if the hypermap is
not connected, then we can find two faces of the hypermap that lead
into the same topological component of $Y(V,E_2(V))$.  If the
hypermap is connected but not biconnected, then some face of the
hypermap is not simple.  Both possibilities contradict the fact that
$D_U$ is a simple face.
\qed\end{proof}

\section{Tame Contact}

This subsection defines a notion of tameness that includes hypermaps
that arise as the fan of a packing with kissing number twelve.  In the
definition of tame hypermap in \cite{DSP}, a function $b$
is used.  In this section we use a similar function,  which is
again called $b$.   Recall that $\op{tgt}=1.541$.  \indy{Index}{tame}%
\indy{Index}{hypermap!tame}%

\begin{definition}[b]\guid{IHRZTPV}
  Define $b:\ring{N}\times \ring{N}\to \ring{R}$ by
  $b\pqr{(p,q)}=\op{tgt}$, except for the following values:
\[
b(0,3)=b(1,3)=0.618,\quad b(2,2)=0.412.
\]
\end{definition}
\indy{Notation}{b@$b$ (contact weight parameter)}%

\begin{definition}[d]\guid{VUJQZCG}
Define $d_1:\ring{N}\to \ring{R}$ by
\[d_1(k) = \begin{cases}
0, & k\le 3, \\
0.206 + 0.27 (k-4),& k=4,\ldots,8,\\
\op{tgt},& k>8.\\
\end{cases}
\]
\end{definition}
\indy{Notation}{d@$d_1$ (real parameter)}%

The function $d_1$ is related to the two-variable function in
Lemma~\ref{lemma:main-estimate-12}: $d_1(k) = d_2(k,0)$, when $4\le k\le
8$.

We say that a node of a  fan $(V,E)$ has type $(p,q,r)\in\ring{N}^3$ if at the node there are
$p+q+r$ faces, of which $p$ are triangles and $q$ are quadrilaterals.

\indy{Index}{weight!contact assignment}%
  \indy{Index}{contact!weight assignment}%
\indy{Notation}{zzt@$\tau$ (weight assignment)}%
\begin{definition}[weight~assignment]\guid{GLIQSFM}
  A \fullterm{weight assignment\/}{weight!assignment} 
on a hypermap $H$ is a
  function $\tau$ on the set of faces of $H$ taking values in the set
  of nonnegative real numbers. A weight assignment $\tau$
is a \newterm{contact}
  weight assignment if the following two properties hold.
\begin{enumerate}
\item If the face $F$ has cardinality $k$, then
$\tau(F) \ge d_1(k)$.
\item If a node $\v$ has type $(p,q,0)$, then
  \[\sum_{F:\,\v\cap F\ne\emptyset} \tau(F) \ge
    b{\pqr{(p,q)}}.\]
\end{enumerate}
The sum $\sum_F \tau(F)$ is called the \fullterm{total weight}{weight!total} of $\tau$.
\end{definition}
\indy{Index}{total weight|see{weight}}%

\begin{definition}[tame contact]\guid{XJPQTIV}
  A hypermap has \fullterm{tame contact}{tame!contact} if it satisfies the following 
  conditions.
\indy{Index}{tame!contact}%
\indy{Index}{contact!tame}%
\indy{Index}{planar}%
\indy{Index}{biconnected}%
\indy{Index}{nondegenerate}%
\indy{Index}{loop}%
\indy{Index}{double join}%
\begin{enumerate}
\item \firstcase{biconnected} The hypermap is biconnected.  In particular,
  each face meets each node in at most one dart.
\item \firstcase{planar} The hypermap is plain and planar.  (A hypermap is defined
to be plain when $e$ is an involution on $D$; that is, $e^2 = I_D$. A connected hypermap
is planar when Euler's relation holds: $\#n + \#e + \#f = \card(D) + 2$, where 
for any permutation $h$, we write $\#h$ for
the number of orbits of $h$ on $D$.)
\item \firstcase{nondegenerate} The edge map $e$ has no fixed points.
\item \firstcase{no loops} The two darts of each edge lie in different
  nodes.
\item \firstcase{no double join} At most one edge meets any two given
  nodes.
\item \firstcase{face count} The hypermap has at least two faces.
\item \firstcase{face size} The cardinality of each face is at least three
  and at most eight.
\item \firstcase{node count} The hypermap has twelve nodes.
\item \firstcase{node size} The cardinality of every node is at least two  and at most four.
\item \firstcase{weights} There exists a contact weight assignment of total
  weight less than $\op{tgt}$.
\end{enumerate}
\end{definition}


\begin{theorem}\guid{ZXZSVPH} The contact hypermap $\op{hyp}(V,E_2(V))$ of a 
  packing $V\in \CalV$ is a hypermap with tame contact.
\end{theorem}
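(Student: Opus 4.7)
The plan is to verify, clause by clause, that $H := \op{hyp}(V, E_2(V))$ satisfies each item in the definition of tame contact. By Lemma~\ref{lemma:V'-bi} I may replace $V$ by an element of $\CalV'$ without altering the contact graph, so that $(V, E^+(V))$ is a biconnected fan and the estimates tied to that fan are available.

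The purely combinatorial clauses are essentially immediate. Biconnectedness of $H$ is Lemma~\ref{lemma:biconnected}. Since $E_2(V)$ is a set of unordered pairs of distinct points, the edge map is a fixed-point-free involution, so $H$ is plain, nondegenerate, loop-free, and without parallel edges (no double join). Planarity of the hypermap of a fan is standard (\cite[Ch.~5]{DSP}). The node count is $12$ by the definition of $\CalV$, and Euler's formula combined with $\#E \ge 12$ (forced by biconnectedness on $12$ nodes) gives $\#F \ge 2$.

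For the analytic clauses I use the natural weight assignment $\tau(F) := \tau(U_F)$, with $U_F$ the topological component of $Y(V, E_2(V))$ corresponding to $F$. Biconnectedness forces $s_U = 0$, and then Lemma~\ref{lemma:tauU'} yields $\tau(F) \ge \min(d_2(k(F), 0), \op{tgt}) = d_1(k(F))$. Summing, Lemma~\ref{lemma:uce} gives total weight exactly $4\pi - 20\sol_0 < \op{tgt}$. This handles the total-weight half of the weight clause and simultaneously forces $k(F) \le 8$, since otherwise $d_1(k) = \op{tgt}$ would exhaust the budget on a single face. The lower bound $k(F) \ge 3$ is standard for biconnected plain planar graphs with no loops or multi-edges.

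The remaining and hardest input is spherical-geometric, and this is where the main obstacle lies. For the node-size lower bound $\ge 2$, biconnectedness suffices. For the upper bound $\le 4$, translate so the contact neighbors of $v$ lie on the small circle at polar angle $\pi/3$ around $v$ on $S^2(2)$; two points on this circle separated by angle $\phi$ around the axis through $v$ are at Euclidean distance $\sqrt{6 - 6\cos\phi}$. The constraint from the definition of $\CalV$ forces $\phi$ to equal $\arccos(1/3) \approx 70.5^\circ$ or to exceed $\arccos(1 - 2h_0^2/3) \approx 93.4^\circ$, and a short linear combination rules out five such gaps summing to $2\pi$. The same computation pins the interior angle at $v$ of a triangular face to $\arccos(1/3)$ and bounds the interior angle at $v$ of a quadrilateral face by roughly $124^\circ$ (the extremal case being the narrowest admissible rhombus). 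Requiring the face angles at $v$ to sum to $2\pi$, the only types $(p,q,0)$ with $p + q \le 4$ that survive are $(0,3)$, $(1,3)$, and $(2,2)$; the remaining candidate $(0,4)$ is eliminated by reapplying the gap argument, since each of the four gaps at $v$ must then correspond to a non-contact pair. For each of the three surviving types, $\sum_{F \ni v} \tau(F) \ge q \cdot d_1(4) = 0.206\,q$ matches $b(p,q)$ on the nose, completing the weight clause.
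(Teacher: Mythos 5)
Your proposal is correct and follows essentially the same route as the paper: clause-by-clause verification using the weight assignment $F\mapsto\tau(U_F)$, with biconnectedness from Lemma~\ref{lemma:biconnected}, the lower bounds from Lemma~\ref{lemma:tauU'} (via the main estimate), and the total weight from Lemma~\ref{lemma:uce}. The only difference is cosmetic: you derive the face count from Euler's relation rather than from simplicity of faces, and you inline the azimuthal-gap computation that the paper isolates as Lemma~\ref{lemma:no-5} for the node-size and node-type bounds.
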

\indy{Index}{hypermap!tame}%
\indy{Index}{hypermap!contact}%
\indy{Index}{hypermap}%

\begin{proof} It is enough to go through the list of properties that
  define a tame contact hypermap and to verify that the contact
  hypermap satisfies each one.  We use the weight assignment $F\mapsto
  \tau(V,E_2(V),F)$.

\begin{enumerate}
\item \case{biconnected} The hypermap is biconnected by
  Lemma~\ref{lemma:biconnected}.
\item \case{planar} The contact hypermap is plain and planar by the
  general properties of fans.  (See \cite[Lemma~5.8]{DSP} and \cite[5.3]{DSP}.)
\item \case{nondegenerate} The
  edge map has no fixed points by the general properties of fans.
See \cite[Lemma~5.8]{DSP}.
\item \case{no loops} 
  There are no loops by the general properties of
  fans. See \cite[Lemma~5.8]{DSP}.
\item \case{no double join} This is also a general property of fans. See \cite[Lemma~5.8]{DSP}.
\item \case{face count} Each node has at least two darts by
  biconnectedness. Each face is simple; so the two darts at a node lie
  in different faces.  Thus, the hypermap has at least two faces.
\item \case{face size} The cardinality of each face is at least three
  because, as we have just observed, the hypermap has no loops or double joins.
  The cardinality of a face is at most eight because of the estimate
  $d_2(9,0)>\op{tgt}$.
\item \case{node count} There are twelve nodes by the definition of a
  packing with kissing number twelve.
\item \case{node size} We have already established that the cardinality
  of each node is at least two.  The proof that the cardinality is
  never  greater than four appears in Lemma~\ref{lemma:no-5}.
\item \case{weights} Theorem~\ref{lemma:main-estimate-12} establishes 
 the inequality $\tau(V,E_2(V),F)\ge d_1(k)$.
   \indy{Notation}{zzt@$\tau(V,E,F)$}%
  The total weight of the weight assignment is given by
  Lemma~\ref{lemma:uce}:
\[
  \sum_F \tau(V,E_2(V),F) = (4\pi - 20\sol_0) < \op{tgt}.
\]
\indy{Index}{weight!total}%
Let $\v$ be a node of type $(p,q,0)$.  
Then by the main estimate,
\[
\sum_{F\mid F\cap \v\ne\emptyset}\tau(V,E_2(V),F) > d_1(4)~q.
\]
This gives the nonzero entries in the table of bounds $b(p,q)$.  The
remaining entries follow from Lemma~\ref{lemma:no-5}.
\end{enumerate}
\qed\end{proof}

\begin{lemma}\guid{CQRHDZE}\label{lemma:no-5} 
  Let $V\in \CalV$.  Every node of $(V,E_2(V))$ has degree at most
  four.  Furthermore, suppose the type of a node is $(p,q,0)$.  Then
  $(p,q)$ must be
\[
(0,3),~(1,3),~\text{ or}~~(2,2).
\]
\end{lemma}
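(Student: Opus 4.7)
The plan is to run an angle-counting argument at a fixed node $\v$, supplemented by a uniqueness observation about three-sphere intersections for one delicate case. Fix $\v \in V$ with degree $d$ in the fan $(V, E_2(V))$, and let $\u_1, \ldots, \u_d$ be the neighbors of $\v$ in cyclic order. Each $\u_i$ lies on the circle of radius $\sqrt{3}$ in the plane perpendicular to $\v$ at distance $1$ from $\orz$, and the azimuth wedge $\phi_i$ at $\v$ between consecutive neighbors satisfies $\norm{\u_i}{\u_{i+1}} = 2\sqrt{3}\sin(\phi_i/2)$. By Lemma~\ref{lemma:gap} the distance $\norm{\u_i}{\u_{i+1}}$ equals either $2$ (contact edge; the associated face is the triangle $\v \u_i \u_{i+1}$, whose cone with $\orz$ is a regular tetrahedron with dihedral $\alpha := \arccos(1/3) \approx 1.231$) or is at least $2h_0$ (the face is then at least a quadrilateral, with short-side azimuth $\phi_i \ge \beta := 2\arcsin(h_0/\sqrt{3}) \approx 1.631$ or long-side azimuth $\phi_i \le 2\pi - \beta$). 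The wedges sum to $2\pi$, and biconnectedness (Lemma~\ref{lemma:biconnected}) gives $d \ge 2$.

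For the degree bound, $d\alpha \le 2\pi$ forces $d \le 5$. If $d = 5$, the all-triangle choice gives $5\alpha \approx 6.155 < 2\pi$, and any configuration containing a non-triangle face gives $\sum \phi_i \ge 4\alpha + \beta \approx 6.555 > 2\pi$; neither can equal $2\pi$, so $d \le 4$.

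To classify $(p,q,0)$ types I sharpen the quadrilateral azimuth bound. For a quad face with consecutive vertices $\v, \u, \p, \w$ (all sides chord $2$ and on $S^2(2)$), both $\v$ and $\p$ lie in the perpendicular bisector plane of $\u\w$, both at distance $2$ from $\orz$ and both at distance $\sqrt{4 - \norm{\u}{\w}^2/4}$ from the midpoint $M = (\u+\w)/2$. Hence $\v$ and $\p$ are the two intersection points of two circles sharing a line of centers through $\orz$ and $M$, so $\v + \p$ lies along that line and is parallel to $\u + \w$. Combined with the direct evaluation $\langle \v + \p,\, \u + \w \rangle = \langle\v,\u\rangle + \langle\v,\w\rangle + \langle\p,\u\rangle + \langle\p,\w\rangle = 8$ and the identity $|\v+\p|^2 = 16 - \norm{\v}{\p}^2$, this yields
\[
(16 - \norm{\v}{\p}^2)(16 - \norm{\u}{\w}^2) = 64.
\]
Since the diagonal $\{\v,\p\}$ is not in $E_2$, we have $\norm{\v}{\p} \ge 2h_0$, so $\norm{\u}{\w}^2 \le 64/(16 - 4h_0^2) \approx 9.37$ and the short azimuth is at most $\gamma := 2\arcsin(\sqrt{9.37}/(2\sqrt{3})) \approx 2.164$. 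Each quadrilateral azimuth therefore lies in $[\beta, \gamma] \cup [2\pi - \gamma, 2\pi - \beta]$. A routine case analysis over all $(p,q)$ with $p+q \in \{2,3,4\}$---in each case the required quad-azimuth total is $2\pi - p\alpha$, to be realized as a sum of $q$ values from this allowed union---eliminates every $(p,q)$ except $(0,3)$, $(2,2)$, $(1,3)$ and the single azimuth-consistent exception $(0,2)$ at $d = 2$.

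The remaining case $(0,2)$ requires a geometric uniqueness argument. Both quads at $\v$ share the two neighbors $\u, \w$, and each has a fourth vertex $\p_i \in V$ with $\normo{\p_i} = \norm{\p_i}{\u} = \norm{\p_i}{\w} = 2$. These three equations have at most two common solutions on $S^2(2)$: $\v$ itself and one other point $\v^*$. Since $\p_i \ne \v$, necessarily $\p_1 = \p_2 = \v^*$, so both quadrilateral faces bound the same $4$-cycle $\v\u\v^*\w$. In the biconnected planar hypermap this cycle separates $S^2(2)$ into precisely these two quadrilateral regions, leaving no room for the remaining eight vertices of $V$ and contradicting $\card(V) = 12$. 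The main obstacles are this $(0,2)$ elimination and the derivation of the spherical-rhombus identity $(16 - A)(16 - B) = 64$, both of which go beyond pure azimuth accounting.
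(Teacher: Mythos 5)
Your proof is correct and follows the same azimuth-counting strategy as the paper, but you have been more careful at a point the paper treats as self-evident. The paper asserts that the face angle of a quadrilateral at any node is bounded above by $\beta_4 = 2\,\op{dih}(2,2,2,2,2h_0,2) \approx 2.17 < \pi$, and then simply runs the linear constraint $p\alpha_3 + q\alpha_4 + r\alpha_5 \le 2\pi \le p\beta_3 + q\beta_4 + r\beta_5$; the type $(0,2,0)$ dies immediately because $2\beta_4 < 2\pi$. That $\beta_4$ bound is obtained by splitting the spherical rhombus along the diagonal at $\v$, which tacitly assumes the face region $U_F$ is the geodesically convex side — i.e.\ no reflex angles. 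You correctly observed that nothing in the pointwise azimuth data rules out a reflex azimuth at a quad face, and indeed at a degree-$2$ node with two quad faces \emph{one of them must} present a reflex azimuth, so you cannot apply the short-azimuth bound to both. You therefore allow each quad azimuth to lie in $[\beta,\gamma] \cup [2\pi-\gamma, 2\pi-\beta]$, and this leaves $(0,2)$ alive after pure angle counting; you then close it with a genuinely different argument (the fourth vertex of each quad solves $\normo{\p}=\norm{\p}{\u}=\norm{\p}{\w}=2$, which has a unique solution other than $\v$, forcing the two quads to bound the same $4$-cycle and leaving no room for the remaining eight nodes). Your rhombus diagonal identity $(16-\norm{\v}{\p}^2)(16-\norm{\u}{\w}^2)=64$ is a clean closed form equivalent to the paper's $\beta_4 = 2\,\op{dih}(2,2,2,2,2h_0,2)$; both give the same upper bound $\gamma \approx 2.17$ on the short quad azimuth. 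The net comparison: the paper's proof is shorter because it takes convexity of faces for granted (justified by the global fact that a face region of the twelve-point contact graph cannot enclose the other eight vertices, though this is never said), while your proof makes the reflex possibility explicit and resolves the one case it leaves open by an elementary uniqueness argument. Both routes are sound.
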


\begin{proof} The interior angles of a spherical polygon in the
  contact graph have the following lower $\alpha_k$ and upper bounds
  $\beta_k$, as a function of the number of sides $k$.
\begin{equation}
\begin{array}{lllll}
  \phantom{\ge}k~~&\alpha_k & \beta_k \vspace{6pt}\\
  \phantom{\ge}3~~&\op{dih}(2,2,2,2,2,2)  &\op{dih}(2,2,2,2,2,2)\\
  \phantom{\ge}4~~&\op{dih}(2,2,2,2h_0,2,2) &2\,\op{dih}(2,2,2,2,2h_0,2)\\
  {\ge}5~~& \op{dih}(2,2,2,2h_0,2,2) ~~~& 2\pi.
\end{array}
\end{equation}
Thus,
\[
  p\,\alpha_3 + q\,\alpha_4 +r\, \alpha_5 
\le 2\pi \le p\,\beta_3 + q\,\beta_4 + r \,\beta_5.
\]
There are no solutions for
$(p,q,r)$ in natural numbers when $p+q+r\ge 5$ and
 only the three given solutions in $(p,q,r)$ with $r=0$.
\qed\end{proof}

\section{Classification}

The website for the computer code  contains a list of eight hypermaps
that have been obtained by running the classification algorithm with
the tame contact parameters~\cite{website:FlyspeckProject}.

\begin{lemma}[tame hypermap classification]\guid{AZYOJBE}\cutrate{}
  \label{lemma:contact-classification} Every hypermap with tame
  contact is isomorphic to a hypermap in the given list of eight
  hypermaps, or is isomorphic to the opposite of a hypermap in the
  list.  \indy{Index}{isomorphism!tame contact classification}%
\end{lemma}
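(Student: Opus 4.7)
The plan is to reduce the classification to a finite branch-and-bound enumeration and execute it by computer, essentially re-running the classification algorithm of \cite{DSP} with the functions $b$ and $d_1$ replaced by the contact versions defined in this section. The ten axioms of a tame contact hypermap are exactly the sort of local combinatorial constraints that algorithm is designed to exploit, so I would not re-engineer the search but rather feed it new parameters.

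First I would argue that the search space is a priori finite: by the node count axiom the hypermap has twelve nodes, by the node size axiom each node has degree at most four, so the number of darts is at most $48$, and by Euler's relation the number of faces is bounded in turn. The number of plain, planar, biconnected hypermaps with these cardinalities, up to isomorphism, is finite. Next I would adapt the canonical recursive construction from \cite{DSP}: start from a distinguished seed face of some size $k\in\{3,\ldots,8\}$ and grow the hypermap face by face in a fixed order, at each stage either extending the current boundary by attaching a new face of admissible size or closing it off by identifying nodes. Canonicity (and avoidance of double counting) is handled in the standard way by keeping only constructions that are minimal in a prescribed traversal order, together with the ``opposite'' symmetry which corresponds to reversing the orientation of $S^2$.

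The pruning at each partial stage uses exactly the tame contact axioms. Rejections are triggered by (i) any face exceeding size eight; (ii) any node exceeding degree four, or achieving degree $p+q+0$ with $(p,q)\notin\{(0,3),(1,3),(2,2)\}$, invoking Lemma~\ref{lemma:no-5}; (iii) forbidden local configurations (loops, double joins, isolated darts at the wrong parity, non-plain edges); (iv) the weight bound, namely that the partial total $\sum_F d_1(k(F))$, augmented by the node penalties $b(p,q)$ at every node already of type $(p,q,0)$, is already at least $\op{tgt}$. Because $d_1$ grows with $k$ and because $b(p,q)=\op{tgt}$ outside a tiny list of exceptions, rule (iv) is a very aggressive cut and is what keeps the search tree small.

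The main obstacle is to certify that the enumeration is both sound and complete. Soundness is a per-leaf check: for each surviving hypermap one verifies the ten axioms directly, exhibiting an explicit contact weight assignment of total weight below $\op{tgt}$. Completeness requires proving that the canonical construction order genuinely visits every isomorphism class of tame contact hypermaps; this reduces to lemmas already established for the tame case in \cite{DSP}, since plainness, planarity, and biconnectedness do the same combinatorial work here as there. Once soundness and completeness are in place, one actually executes the algorithm with the new parameters and observes that, modulo isomorphism and the orientation-reversing opposite, exactly the eight hypermaps posted on the project website survive, which is the conclusion of the lemma.
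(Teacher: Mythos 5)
Your proposal is correct and takes essentially the same approach as the paper: the paper's entire proof is a one-line appeal to a computer calculation (\texttt{PYWHMHQ}) running the classification algorithm of \cite[Chapter~4]{DSP} with the tame-contact parameters, and your text is simply an expanded description of that same enumeration. The only caution is in your pruning rule (iv): the per-face bounds $d_1(k(F))$ and the per-node bounds $b(p,q)$ both bound the same quantities $\tau(F)$, so they cannot simply be added; the admissibility lower bound must combine them without double counting, as in the tame case in \cite{DSP}.
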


\begin{proof}
  By a \cc{PYWHMHQ}{}, 
 the set of all hypermaps has been classified by the same
  algorithm described in \cite[Chapter~4]{DSP}.
  \indy{Index}{contact!tame}%
  \indy{Index}{hypermap}%
  \indy{Index}{hypermap!tame}%
\qed\end{proof}

\begin{lemma}\guid{MWWSZTX}\label{lemma:fcc-ft} Let $V\in \CalV$.
  Suppose that $H=\op{hyp}(V,E_2(V))$ is a hypermap with tame
  contact.  Then $H$ is the FCC or HCP contact hypermap.
\end{lemma}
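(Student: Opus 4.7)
The plan is to combine the combinatorial classification (Lemma~\ref{lemma:contact-classification}) with geometric estimates (the main estimate Theorem~\ref{lemma:main-estimate-12}, the total-weight identity Lemma~\ref{lemma:uce}, and the node-type constraints Lemma~\ref{lemma:no-5}) to eliminate all but two candidates from the list of eight tame-contact hypermaps. First, I would invoke Lemma~\ref{lemma:contact-classification} to conclude that $H$ is isomorphic to one of the eight listed hypermaps or to the opposite of one of them. Among those eight, I would identify (by direct inspection of combinatorial type) the two that coincide with the FCC and HCP contact hypermaps; since taking the opposite corresponds to a reflection of $V$, it does not produce new realizations up to isometry.

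For the remaining six hypermaps, the plan is to show that none can be realized as $\op{hyp}(V,E_2(V))$ for any $V\in\CalV$. Concretely, for each such candidate $H$, I would set up the linear program whose variables are the face weights $\tau(V,E_2(V),F)$ together with auxiliary geometric quantities (solid angles at $\orz$, azimuth angles around each node, edge-length category indicators), subject to:
\begin{itemize}
\item the lower bounds $\tau(V,E_2(V),F)\ge d_1(\card F)$ coming from the main estimate, refined via $d_2(r,s)$ when edges of length in $[2h_0,\sqrt8)$ are present;
\item the total-weight identity $\sum_F \tau(V,E_2(V),F)=4\pi-20\sol_0<\op{tgt}$ from Lemma~\ref{lemma:uce};
\item the node-type constraints of Lemma~\ref{lemma:no-5}, so each node is of type $(0,3)$, $(1,3)$, or $(2,2)$;
\item the azimuth sum $\sum \azim = 2\pi$ at every node, together with the $\alpha_k,\beta_k$ bounds from Lemma~\ref{lemma:no-5}.
\end{itemize}
If the LP is infeasible, the corresponding hypermap cannot be realized. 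For the cases where the LP is merely tight rather than infeasible, I would sharpen the estimates by exploiting Lexell's theorem (as in the proof of Theorem~\ref{lemma:main-estimate-12}), arguing that any minimizing configuration forces all Delaunay triangles to be equilateral of side $2$, at which point the rigid arrangement of twelve contacts must be either the FCC or HCP pattern.

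The hard part will be the LP elimination of the six non-FCC/non-HCP candidates: unlike the broad bounds $d_2(r,s)$ used so far, here one needs tight estimates matched to the specific face configurations, and possibly case splits on whether particular pairs of nodes are joined by a short or long edge. A secondary obstacle is ruling out borderline cases where a candidate hypermap admits a degenerate realization saturating all inequalities; these must be handled by the rigidity argument via Lexell's theorem, showing that saturation forces exactly the FCC or HCP geometry and hence that the candidate combinatorial type collapses to one already on our approved list. Once every other candidate is excluded, the lemma follows. \qed
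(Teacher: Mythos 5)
Your high-level scaffolding (invoke the classification, keep FCC/HCP, eliminate the rest) matches the paper, but your elimination step diverges from the paper's proof in ways that would cause it to fail, and it also misses one case entirely.

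First, the paper does not eliminate all six remaining candidates by linear programming: one of them, a hypermap containing a hexagonal face, is killed by a separate geometric argument that has nothing to do with LP bounds or Lexell rigidity. For that case the hexagon would have all six edges of arclength $\pi/3$, giving perimeter exactly $2\pi$; but a geodesically convex spherical polygon has perimeter strictly less than $2\pi$, a contradiction. Your proposal has no analogue of this step, and your fallback (``saturation forces all Delaunay triangles to be equilateral of side $2$, hence FCC/HCP'') does not apply here, since the offending hypermap has a hexagonal face and is simply not realizable as any $V\in\CalV$ at all.

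Second, the LP you describe would not be infeasible for any of the eight candidates. You propose to constrain the face weights by $d_1$, $d_2$, and the total-weight identity $\sum_F\tau = 4\pi-20\sol_0 < \op{tgt}$; but by the definition of tame contact every one of the eight listed hypermaps already admits a weight assignment satisfying exactly those constraints --- that is precisely how they got onto the list. Adding those constraints to your LP buys nothing. The paper's LP instead uses purely geometric angle constraints: the azimuth sum $2\pi$ at each node, the exact value $\alpha_3$ for every triangle angle, the interval $[\alpha_4,\beta_4]$ for every quadrilateral angle, and --- crucially --- the constraint that \emph{opposite angles of each quadrilateral are equal} (a contact quadrilateral is a spherical rhombus of side $\pi/3$). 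This last constraint is the one you omit, and it is what turns the system infeasible for the remaining five cases. You do mention azimuth sums and the $\alpha_k,\beta_k$ bounds, so you are partway there, but without the rhombus equal-opposite-angle condition the LP is unlikely to close, and your proposed rescue via Lexell rigidity is not what the paper does and is not clearly adequate as stated.
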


\begin{proof} The explicit enumeration of hypermaps with tame
  contact has eight cases.  Two are the hypermaps of the
  FCC and HCP.  The remaining six must be eliminated.  
A geometrical argument  eliminates one of these cases and linear programming
eliminates the other five.

\claim{We claim that one case with a hexagon cannot be realized geometrically as a contact fan
(Figure~\ref{fig:fthex}).}  Indeed, the perimeter of a hexagon with sides $\pi/3$
is $2\pi$.  However, the hexagons are geodesically convex,
 and $2\pi$ is a strict upper bound on the perimeter of the
hexagon.  Thus, this case does not exist.

\def\smalldot#1{\draw[fill=black] (#1) node [inner sep=1.3pt,shape=circle,fill=black] {}}
\def\graydot#1{\draw[fill=gray] (#1) node [inner sep=1.3pt,shape=circle,fill=gray] {}}
\def\whitedot#1{\draw[fill=gray] (#1) node [inner sep=1.3pt,shape=circle,fill=white,draw=black] {}}
\tikzset{dartstyle/.style={fill=black,rotate=-90,inner sep=0.7pt,dart,shape border uses incircle}}
\tikzset{grayfatpath/.style={line width=1ex,line cap=round,line join=round,draw=gray}}

\pgfmathsetmacro\cmofpt{(2.54/72.0)}

\def\figCXFENOK{
\tikzfig{fthex}{\guid{CXFENOK} This hypermap has tame contact but cannot be realized as a packing
in $\CalV$.}
{
[scale=0.004]
\path ( 400,0) node (P0) {};
\path (60:400)  node (P1) {};
\path (120:400) node (P2) {};
\path ( -400,0) node (P3) {};
\path ( -200,-346) node (P4) {};
\path ( 200,-346) node (P5) {};
\path (330:220) node (P6) {};
\path (270:220) node (P7) {};
\path(210:220) node (P8) {};
\path (30:220) node (P11) {};
\path (150:220) node (P9) {};
\path (90:220) node (P10) {}; 
\draw
  (P0) -- (P5)
  (P0) -- (P1)
  (P0) -- (P11)
  (P0) -- (P6)
  (P1) -- (P2)
  (P1) -- (P10)
  (P1) -- (P11)
  (P2) -- (P3)
  (P2) -- (P9)
  (P2) -- (P10)
  (P3) -- (P4)
  (P3) -- (P8)
  (P3) -- (P9)
  (P4) -- (P5)
  (P4) -- (P7)
  (P4) -- (P8)
  (P5) -- (P6)
  (P5) -- (P7)
  (P6) -- (P11)
  (P6) -- (P7)
  (P7) -- (P8)
  (P8) -- (P9)
  (P9) -- (P10)
  (P10) -- (P11)
;
\foreach \i in {P0,P1,P2,P3,P4,P5,P6,P7,P8,P9,P10,P11} {
  \smalldot {\i};
}
}}

\figCXFENOK 

There are some linear
  programming constraints that are immediately available to us.
\begin{enumerate}\wasitemize 
\item The angles around each node sum to $2\pi$.
\item Each angle of a triangle is $\alpha_3$.
\item Each angle of each rhombus lies between $\alpha_4$ and $\beta_4$.
\item The opposite angles of each rhombus are equal.
\end{enumerate}\wasitemize 
By a linear programming \cc{JKJNYAA}{},
these systems of constraints are infeasible in the remaining five cases.
\qed\end{proof}

\begin{lemma}\guid{YRTPQXK}\label{lemma:kiss-fcc}
  Let $V\in \CalV$ be a packing such that $\op{hyp}(V,E_2(V))$ is
  isomorphic to the FCC or HCP contact hypermap.  Then $V$ is
  congruent to the FCC or HCP configuration in $S^2(2)$.
\end{lemma}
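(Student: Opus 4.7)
The plan is to establish uniqueness by a rigidity argument that leverages the rigidity of the triangular faces of the hypermap. Fix an isomorphism between $H=\op{hyp}(V,E_2(V))$ and the FCC (resp.\ HCP) contact hypermap. Each triangular face of the hypermap corresponds to three points of $V\subset S^2(2)$ at pairwise distance $2$, i.e., an equilateral triangle of side $2$ inscribed in $S^2(2)$, which is rigid in $\mathbb{R}^3$. I would place one such triangle $T_0$ in standard position, fixing the overall isometry of $V$ up to a finite ambiguity resolved by the cyclic order information in the hypermap.

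In the FCC case, the $8$ rigid equilateral triangles, glued via shared vertices according to the cuboctahedron pattern, admit Buckminster Fuller's $1$-parameter jitterbug family of realizations in $\mathbb{R}^3$; along this family all $24$ contact edges retain length $2$ but the vertices move, with the circumradius varying continuously and attaining its maximum value uniquely at the cuboctahedron state (where circumradius equals edge length, namely $2$). Therefore the constraint $V\subset S^2(2)$ forces the configuration to be at the cuboctahedron, so $V$ is congruent to the FCC kissing arrangement. The HCP case proceeds analogously, with the triangular orthobicupola replacing the cuboctahedron: the $8$ rigid triangles in the orthobicupola pattern admit an analogous flex with monotone circumradius, and the spherical constraint selects the orthobicupola state uniquely.

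The main obstacle is justifying the rigidity claim --- that the $8$ rigid equilateral triangles placed in the prescribed combinatorial pattern and with all $12$ vertices on $S^2(2)$ admit a unique realization up to congruence. This reduces to monotonicity of the circumradius along the jitterbug (resp.\ orthobicupola-analog) parameter, which can be verified either by explicit parametrization of the motion or by an infinitesimal rigidity computation at the known solution (showing the Jacobian of the system of edge-length and sphere-incidence equations achieves maximal rank) combined with a compactness/connectedness argument in configuration space. Once uniqueness of the configuration is established, the verification that $V\in\CalV$ is routine: in both the FCC and HCP arrangements, the non-contact distances take only the values $2\sqrt{2},\,2\sqrt{3},\,4$, all well above $2h_0=2.52$, so the packing separation condition is automatic.
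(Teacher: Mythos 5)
Your approach matches the paper's: both reduce the lemma to the rigidity of the eight equilateral triangles inscribed in $S^2(2)$ and glued according to the cuboctahedron (resp.\ orthobicupola) combinatorics. The paper's entire proof is the assertion that ``the eight triangles rigidly determine $V$ up to congruence,'' and your jitterbug discussion is an attempt to substantiate that assertion rather than a different route.

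Two cautions about the substantiation. First, the jitterbug is a single one-parameter flex, whereas the bar framework consisting of the twenty-four contact edges on twelve vertices has, by a naive count, several independent degrees of freedom modulo Euclidean motions; so showing that the circumradius is maximized uniquely at the cuboctahedron \emph{along the jitterbug} does not by itself show that $V$ lies in the jitterbug family. What you actually need is that the full system of twenty-four edge constraints together with the twelve sphere-incidence constraints determines the configuration. Second, your fallback of ``infinitesimal rigidity plus a compactness/connectedness argument'' has a weak link in the connectedness step: there is no evident reason for the edge-constrained configuration space to be path-connected, and without that, infinitesimal rigidity at the cuboctahedron rules out nearby realizations but not other isolated ones. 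Finally, the HCP case is asserted by analogy without verifying that the orthobicupola admits the claimed flex or that the circumradius is monotone along it. None of this makes your route wrong --- it is the paper's route --- but you should be aware that the jitterbug observation alone is not a proof of rigidity, and that you have correctly put your finger on the one step the paper leaves unargued.
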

\indy{Index}{HCP}%
\indy{Index}{FCC}%
\indy{Index}{kissing configuration}%
\indy{Index}{hypermap}%

\begin{proof} Every face of the hypermap of $(V,E_2(V))$ is a
  triangle or quadrilateral.  The eight triangles in the FCC or HCP
  contact hypermap determine eight equilateral triangles in $V$ of
  edge length $2$.  The eight triangles rigidly determine $V$ up to
  congruence.
\qed\end{proof}

\begin{proof}[Proof of Theorem~\ref{thm:fc}]  
  The contact hypermap of a packing with kissing number twelve has tame
  contact.  By Theorem~\ref{lemma:fcc-ft}, this hypermap is that of
  the FCC or HCP.  By Lemma~\ref{lemma:kiss-fcc}, the kissing
  configuration of the packing is congruent to the FCC or HCP.  As the
  center of the packing may be chosen at an arbitrary point in the
  packing, every point in the packing is congruent to one of these two
  arrangements.  The result ensues.
\qed\end{proof}

\raggedright
\bibliographystyle{amsalpha} 
\bibliography{all}



\end{document}